\newcommandx{\jow}[2][1=]{\todo[linecolor=orange,backgroundcolor=orange!25,bordercolor=orange,#1]{#2}}
\newcommandx{\mateus}[2][1=]{\todo[linecolor=blue,backgroundcolor=blue!25,bordercolor=blue,#1]{#2}}
\def\R{\mathbb{R}}
\def\N{\mathbb{N}}
\def\Z{\mathbb{Z}}
\newcommand{\eps}{\varepsilon}
\newcommand{\mmd}{\mathrm{d}}
\newcommandx{\eq}{\approxeq}
\newtheorem{theorem}{Theorem}
\newtheorem{proposition}[theorem]{Proposition}
\newtheorem{lemma}[theorem]{Lemma}
\newtheorem{conjecture}[theorem]{Conjecture}
\newtheorem{theoo}{Theorem}
\numberwithin{equation}{section}
\numberwithin{theorem}{section}
\begin{document}
\title[Maximal estimates and local well-posedness for generalized ZK]{Maximal function estimates and local well-posedness for the generalized Zakharov--Kuznetsov equation}
\author{Felipe Linares}
\address[F. Linares]{IMPA\\
Instituto Matem\'atica Pura e Aplicada\\
Estrada Dona Castorina 110\\
22460-320, Rio de Janeiro, RJ\\Brazil}
\email{linares@impa.br} 
\author{Jo\~ao P.G. Ramos}
\address[J.P. Ramos]{IMPA\\
Instituto Matem\'atica Pura e Aplicada\\
Estrada Dona Castorina 110\\
22460-320, Rio de Janeiro, RJ\\Brazil}
\email{joaopgramos95@gmail.com} 

\keywords{Maximal Functions, Zakharov-Kuznetsov equation}
\subjclass{Primary:42B25. Secondary: 35Q53, 42B37}

\begin{abstract}
We prove a high-dimensional version of the Strichartz estimates for the unitary group associated to the free Zakharov--Kuznetsov equation. As a by--product, 
we deduce maximal estimates which allow us to prove local well-posedness for the generalized Zakharov--Kuznetsov equation in the whole subcritical case whenever 
$d \ge 4, k \ge 4,$ complementing the recent results of Kinoshita \cite{Kinoshita1, Kinoshita2} and Herr--Kinoshita \cite{HK}. Finally, we use some of those 
maximal estimates in order to prove pointwise convergence results for the flow of the generalized Zakharov--Kuznetsov equation in any dimension, in the same spirit of \cite{CLS}.
\end{abstract}

\maketitle

\section{Introduction} In a manuscript published in 1974 \cite{ZakharovKuznetsov}, Zakharov and Kuznetsov deduced the equation 
\begin{equation}\label{ZK} 
\partial_t u + \partial_x \Delta u + u \partial_x u = 0,
\end{equation}
where $u = u(x,y,t)$ is a real-valued function, as means to 
describe propagation of ionic-acoustic waves in a magnetized plasma. In \cite{LannesLinaresSaut}, the authors derive \eqref{ZK} from the Euler--Poisson 
system with magnetic field. Our main focus will remain, however, in the \emph{initial value problem} (IVP) associated to the generalized version of this equations, as follows: 
\begin{equation}\label{ZKIVP}
\begin{cases} 
\partial_t u + \partial_x \Delta u + \partial_x(u^{k+1}) = 0  \hskip10pt\text{on}\hskip5pt \R^d\times \R, \;k\in\Z^{+}, \cr 
u(0) = u_0  \hskip95pt\text{on}\hskip5pt \R^d,\cr
\end{cases}
\end{equation}
where the datum $u_0$ is taken to belong to an adequate Sobolev space $H^s(\R^d).$ This problem has attracted the attention of many authors since the work 
of Faminskii \cite{Faminskii} on the $k=1$ case, who initially showed local well-posedness for $s\ge 1$ in the two-dimensional case. Eversince, many others have contributed in 
the two-dimensional case, among which we mention the work of Linares and Pastor \cite{LPa} where it was proved local-wellposednes for $s>3/4$
by employing smoothing effects. The works of Molinet and Pilod \cite{MolinetPilod} and Gr\"unrock and Herr \cite{GruenrockHerr}, which proved local well-posedness 
by using the Fourier restriction method for $s > \frac{1}{2}.$ 

Still in the $k=1$ case, we also mention the work of Molinet--Pilod \cite{MolinetPilod} and Ribaud--Vento \cite{RV2}, which proved local and global well-posedness in $H^s(\R^3)$ for $s > 1$. It was not until recently,
however, in the works of Kinoshita \cite{Kinoshita1} and Herr--Kinoshita \cite{HK} that well--posedness was obtained in the best possible range for the Picard iteration method: 
$s > -\frac{1}{4}$ if $d=2$ and $s > \frac{d-4}{2}\;$ for $d \ge 3.$ 	

A key instrument in all references above in order to study this equation is estimating its \emph{free solutions}. Indeed, let 
\begin{align}\label{eq linear ZK}
\begin{cases}
\partial_t u + \partial_x \Delta u = 0 & \text{ on } \R^d \times \R; \cr
u(x,0) = u_0(x) & \text{ on } \R^d, \cr
\end{cases}
\end{align}
be the linear problem associated to the equation \eqref{ZK}. We denote the group of operators associated to this problem by $U(t)u_0$ given by 
\[
\widehat{U(t)u_0}(\xi,\eta) = e^{it\xi(\xi^2+|\eta|^2)} \widehat{u_0}(\xi,\eta), \; \; \xi \in \R, \eta \in \R^{d-1}.
\]
We remark that, as in the one-dimensional case of the KdV equation, this group possesses good maximal properties: 
indeed, if $d=2,$ Linares and Pastor \cite{LPa} proved that 
\[
\| U(t) u_0\|_{L^4_x L^{\infty}_{y,T}} \lesssim \|u_0\|_s, \, s > 3/4,\; T \in [0,1].
\]
Later on, Gr\"unrock \cite{Gru2} extended this estimate to arbitrary times. Also, Faminskii \cite{Faminskii} showed that the same estimate holds with an $L^2_x$ norm: 
\[
\| U(t) u_0\|_{L^2_x L^{\infty}_{y,t}} \lesssim \|u_0\|_s, \, s > 3/4.
\]
If $d=3,$ Gr\"unrock \cite{Gru1} proved that slightly stronger inequality holds in case $p=4.$ Indeed, he proved that 
\[
\| U(t) u_0 \|_{L^4_{x,y} L^{\infty}_t} \lesssim \| u_0\|_s, \, s>3/4.
\]
This implies, by Sobolev embedding, a maximal estimate of the form 
\[
\| U(t) u_0\|_{L^4_x L^{\infty}_{y,t}} \lesssim \|u_0\|_s, 
\]
for $s>5/4.$ Finally, complementing those results, we have the following \emph{time-weighted} maximal estimate,
proved by Ribaud-Vento \cite{RV2}: 
\[
\|t^{\alpha} U(t) u_0 \|_{L^2_x L^{\infty}_{y,t}} \lesssim \| u_0 \|_s, \, s>1,
\]
where we are allowed to take $\alpha \ge 3/8.$ 

In this note, we would like to address two main questions and assess their consequences, namely: 
\begin{enumerate}
 \item[(i)](Space-time maximal estimate) Given $p \in (1,\infty),$ how large can $s \in \R$ be so that the estimate 
\begin{equation}\label{eq maximal space time} 
\| U(t)u_0 \|_{L^p_x L^{\infty}_{y,t \in [0,1]}} \lesssim \|u_0\|_s 
\end{equation}
holds for all $u_0 \in \mathcal{S}(\R^d)$? 
 \item[(ii)](Time-only maximal estimate) Given $p \in (1,\infty),$ how large can $s \in \R$ be so that the estimate 
\begin{equation}\label{eq maximal time only}
\| U(t)u_0\|_{L^p_{x,y} L^{\infty}_{t \in [0,1]}} \lesssim \|u_0\|_s
\end{equation}
holds for all $u_0 \in \mathcal{S}(\R^d)?$
\end{enumerate}

Regarding those questions, we construct a simple set of counterexamples to give preliminary restrictions on such values of $s.$ Indeed, the partial restrictions we 
have are the following: 
\begin{proposition}\label{thm necessary} Regarding the questions before, the following assertions hold: 
\begin{enumerate}
 \item If \eqref{eq maximal space time} holds, then 
 $$ p \in [2,+\infty) \text{ and }s \ge \frac{d}{2} - \frac{1}{p}.$$ 
 \item If \eqref{eq maximal time only} holds, then 
 $$ p \in [2,+\infty) \text{ and }s \ge \max\left\{d \left(\frac{1}{2} - \frac{1}{p}\right),\frac{3}{2p} - \frac{d}{2}\left(\frac{1}{2} - \frac{1}{p}\right)\right\}.$$ 
\end{enumerate}
\end{proposition}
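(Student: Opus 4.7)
The plan is to establish each bound by exhibiting an explicit family of initial data, following the Knapp paradigm adapted to the dispersion relation $\Phi(\xi,\eta) = \xi(\xi^2+|\eta|^2)$ governing $U(t)$. For the restriction $p \ge 2$ I take $u_0 = \sum_{k=1}^M \varphi(\cdot - z_k)$ with $\varphi \in \mathcal{S}(\R^d)$ and lattice points $z_k$ well-separated (only in the $x$ variable for (1); in all of $\R^d$ for (2)). Since $U(t)$ commutes with spatial translations and $U(t)\varphi$ is Schwartz uniformly in $t \in [0,1]$, when the separations $|z_k - z_{k'}|$ are taken large enough the summands of the maximal function decouple, giving a left-hand side of order $M^{1/p}$ against $\|u_0\|_{H^s} \sim M^{1/2}$; letting $M \to \infty$ forces $1/p \le 1/2$.

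For the Sobolev thresholds I use Knapp wave packets combined with the formula
\[
\det(\nabla^2 \Phi)(\xi,\eta) = 2^d \,\xi^{d-2}\,(3\xi^2 - |\eta|^2),
\]
which is non-degenerate except on the cone $\{|\eta|^2 = 3\xi^2\}$, where exactly one eigenvalue vanishes. In the \emph{off-cone} case I take $\widehat{u_0}(\xi,\eta) = \psi(\xi/N)\chi(\eta/N)$ with $\psi, \chi$ smooth bumps supported away from $0$ and from the cone; then $\|u_0\|_{H^s} \sim N^{s+d/2}$, every eigenvalue of $\nabla^2\Phi$ has size $\sim N$, and the phase of $U(t)u_0$ remains bounded only when $|t| \lesssim N^{-3}$, a regime in which $|U(t)u_0| \sim N^d$. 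The set on which this peak is attained has $x$-measure $\sim N^{-1}$ in the configuration of (1) and $(x,y)$-measure $\sim N^{-d}$ in that of (2), producing $s \ge d/2 - 1/p$ and $s \ge d(1/2 - 1/p)$ respectively.

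For the remaining bound in (2) I take an \emph{on-cone} Knapp: $\widehat{u_0}$ supported on a cube of side $\delta$ centred at $(\xi_0,\eta_0)$ with $|\xi_0| \sim N$ and $|\eta_0|^2 = 3\xi_0^2$. Here $\nabla^2 \Phi$ has one zero eigenvalue and $d-1$ eigenvalues of size $\sim N$, so the quadratic phase on the cube is controlled as long as $tN\delta^2 \lesssim 1$; taking $t=1$ forces $\delta \lesssim N^{-1/2}$. Over $[0,1]$ the packet (amplitude $\sim \delta^d$, physical scale $\delta^{-1}$) is transported by $\nabla\Phi$ of magnitude $\sim N^2$ and sweeps out a tube of volume $\sim N^2\,\delta^{-(d-1)}$ while retaining its peak amplitude. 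Comparing
\[
\|U(t)u_0\|_{L^p_{x,y} L^\infty_t}^p \gtrsim (\delta^d)^p \cdot N^2\, \delta^{-(d-1)} \quad\text{against}\quad \|u_0\|_{H^s}^p \sim (N^s\, \delta^{d/2})^p,
\]
and plugging in $\delta = N^{-1/2}$, yields $s \ge (d+3)/(2p) - d/4$, which is exactly $\tfrac{3}{2p} - \tfrac{d}{2}(\tfrac{1}{2} - \tfrac{1}{p})$.

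The delicate step is the tube analysis in the on-cone case: I need to verify that $\nabla\Phi(\xi_0,\eta_0) = (3\xi_0^2 + |\eta_0|^2,\,2\xi_0\eta_0)$ has a substantial component transverse to the zero eigendirection of $\nabla^2\Phi$ (so that the packet truly sweeps a $(d-1)$-dimensional cross-section in physical space) and that the cubic corrections to the phase, of order $t\cdot \delta^3 = N^{-3/2}$ on the relevant cube, remain negligible. Both are direct computations from the explicit form of $\Phi$.
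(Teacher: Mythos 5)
Your proposal is correct in all its numerology and reaches every condition of the proposition, but it follows a partly different route from the paper. For part (1) your isotropic bump at frequency $N$ is exactly the worst case $j=-k$ of the paper's two-parameter family $\varphi_{j,k}$ (and, in fact, evaluating the sup at $t=0$ already gives the peak set, so the $|t|\lesssim N^{-3}$ window and the off-cone restriction are not needed); the paper instead rules out $p<2$ by letting $j\to\infty$ in that same family, whereas you use a decoupling argument with well-separated translates, which works provided you note that $U(t)\varphi$ has Schwartz seminorms uniform in $t\in[0,1]$ (so the tails are summable) and that large separation gives near-orthogonality in $H^s$. For the first bound in part (2) the paper argues by contradiction through Sobolev embedding in $y$ plus part (1), while your direct $t=0$ evaluation of the bump is an equivalent, more immediate argument. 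The genuine difference is the second bound in part (2): the paper cites Sj\"olin's local necessary condition together with Rogers' local-to-global transference (with $m=3$), while you build a single explicit moving-packet counterexample, which is more elementary and self-contained; your computation $\delta^{d}\bigl(N^{2}\delta^{-(d-1)}\bigr)^{1/p}\lesssim N^{s}\delta^{d/2}$ with $\delta\sim N^{-1/2}$ indeed yields $s\ge \frac{d+3}{2p}-\frac d4=\frac{3}{2p}-\frac d2\bigl(\frac12-\frac1p\bigr)$, matching the paper. Two small remarks on that step: since you take an isotropic cube of side $\delta\lesssim N^{-1/2}$, the constraint comes from the eigenvalues of size $\sim N$, so the on-cone placement and the degenerate eigendirection are never actually exploited (the same packet anywhere at frequency $\sim N$ with $|\nabla\Phi|\sim N^{2}$ works, e.g.\ $\nabla\Phi(\xi_0,\eta_0)=(3\xi_0^2+|\eta_0|^2,2\xi_0\eta_0)$ has first component $\gtrsim N^2$); and the transversality issue you flag at the end is vacuous, because the packet is spatially localized at scale $\delta^{-1}$ in \emph{all} directions, so the swept tube has volume $\sim N^{2}\delta^{-(d-1)}$ regardless of the direction of $\nabla\Phi$ relative to the kernel of $\nabla^2\Phi$, as long as $N^{2}\gg\delta^{-1}$, which holds. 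The cubic error $t\,O(\delta^{3})=O(N^{-3/2})$ is negligible, as you say.
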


As a direct consequence, we are able to establish that some previous results in the literature are, in fact, \emph{sharp}. 

\begin{theorem}\label{thm sharp} Consider the group $\{U(t)\}_{t \in \R}$ the unitary group associated to \eqref{eq linear ZK}. 
\begin{enumerate}
 \item If $d=3,$ then the estimates 
 \[
 \| U(t) u_0\|_{L^p_x L^{\infty}_{y,t\in[0,1]}} \lesssim \| u_0 \|_s
 \]
 hold whenever $s > \frac{3}{2} - \frac{1}{p}$ and $p \ge 2.$ Moreover, this estimate is sharp, up to the endpoint $s = \frac{3}{2} - \frac{1}{p}.$ 
 \item If $d=2,$ then the estimates 
 \[
 \| U(t) u_0\|_{L^p_x L^{\infty}_{y,t\in[0,1]}} \lesssim \|u_0 \|_s
 \]
 hold whenever either $s > 1 - \frac{1}{p}$ and $p \ge 4$ \emph{or} $p \in [2,4)$ and $s > \frac{3}{4}$. Moreover, this estimate is sharp, up to the endpoint
 $s = 1 - \frac{1}{p},$ in case $p \in [4,+\infty)$, or up to the endpoint $s=\frac{3}{4}$ in case $p=2.$ 
\end{enumerate}
\end{theorem}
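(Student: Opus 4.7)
The theorem bundles, for each of $d=2,3$, a positive maximal estimate and a matching sharpness statement. The positive direction will be built out of the endpoint maximal bounds of Faminskii, Linares--Pastor, Gr\"unrock and Ribaud--Vento recalled in the introduction, combined with Sobolev embedding in the transverse variable $y$ and real interpolation in $p$. The sharpness direction is covered by Proposition~\ref{thm necessary}(1) except at the single endpoint $(d,p,s)=(2,2,3/4)$, where the proposition only yields $s \ge 1/2$; constructing an ad hoc counterexample at this remaining endpoint will be the main obstacle.

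For $d=3$, I would start from Gr\"unrock's bound $\|U(t)u_0\|_{L^4_{x,y}L^\infty_t}\lesssim\|u_0\|_s$, $s>3/4$. Since $U(t)$ commutes with Bessel potentials in $y$, the Sobolev embedding $W^{1/2+\eps,4}(\R^2_y)\hookrightarrow L^\infty(\R^2_y)$ upgrades this to $\|U(t)u_0\|_{L^4_x L^\infty_{y,t}}\lesssim\|u_0\|_s$ for $s > 5/4 = 3/2 - 1/4$, which is the claimed endpoint at $p=4$. For $p\in[4,\infty)$ I would interpolate with the trivial bound $\|U(t)u_0\|_{L^\infty_{x,y,t}}\lesssim\|u_0\|_s$, $s>3/2$ (Sobolev embedding on $\R^3$ together with unitarity of $U(t)$ on $H^s$), producing exactly $s > 3/2 - 1/p$. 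For $p \in [2,4]$, I would first extract an unweighted endpoint $\|U(t)u_0\|_{L^2_x L^\infty_{y,t}}\lesssim\|u_0\|_s$, $s>1$, from Ribaud--Vento's weighted estimate on $[0,1]$ by splitting $t\in[0,\delta]\cup[\delta,1]$ and absorbing the short piece via $H^s$-Sobolev control, and then interpolate with the $p=4$ endpoint. The case $d=2$ is analogous: interpolate Faminskii's ($p=2$) and Linares--Pastor's ($p=4$) endpoints at $s>3/4$ to cover $p\in[2,4]$, and interpolate the $p=4$ endpoint with the $p=\infty$, $s>1$ Sobolev bound to cover $p\in[4,\infty)$, reaching $s > 1 - 1/p$.

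For the sharpness, Proposition~\ref{thm necessary}(1) supplies the necessary bound $s \ge d/2 - 1/p$, which matches the positive threshold in Part (1) and in Part (2) for $p \ge 4$. The outstanding case is $d=p=2$, $s=3/4$: I would construct a Knapp-type family of initial data with $\widehat{u_0}$ localized on a rectangle centered at $(\xi,\eta)=(N,0)$, tuned so that over its coherence time the ZK phase $\xi(\xi^2+|\eta|^2)$ reduces to the 1D Airy phase $t\xi^3$ plus a controlled transverse correction $t\xi|\eta|^2$. A stationary-phase computation should then transfer the sharpness of the 1D Airy maximal function in $L^2_xL^\infty_t$ to the ZK setting. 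The delicate step -- and where the bulk of the technical difficulty lies -- is verifying that the transverse cross term $\xi|\eta|^2$ does not dissipate the coherent wave packet on $[0,1]$: a single Knapp packet only produces the scaling-sharp lower bound $s \ge 1/2$ already implied by Proposition~\ref{thm necessary}, so a more refined superposition or a focusing configuration will likely be required to reach $s \ge 3/4$.
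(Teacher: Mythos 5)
Your treatment of the positive estimates is essentially the paper's: known endpoints ($L^4_xL^\infty_{y,t}$ for $d=2$, the Ribaud--Vento $L^2_xL^\infty_{y,[0,1]}$ bound with $s>1$ for $d=3$, Faminskii's $L^2_x$ bound for $d=2$) combined with Sobolev embedding and interpolation in $p$; the paper interpolates at the level of frequency-localized kernels plus $TT^*$, but the outcome is the same. One small caveat: you do not need (and should not rely on) extracting the unweighted $L^2_xL^\infty_{y,t}$, $s>1$ bound from the time-weighted one by splitting $t\in[0,\delta]\cup[\delta,1]$ --- controlling $\sup_{t\in[0,\delta]}$ costs three derivatives via $\partial_t U=-\partial_x\Delta U$, so the ``absorption'' step is not cheap; the unweighted estimate is stated directly in Ribaud--Vento (their Proposition 3.3), which is what the paper cites. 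Likewise, the sharpness claims covered by Proposition \ref{thm necessary}(1) (all of $d=3$, and $d=2$ with $p\ge 4$) are handled exactly as in the paper.

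The genuine gap is the endpoint you flag yourself: sharpness at $d=2$, $p=2$, i.e.\ the necessity of $s\ge 3/4$. Proposition \ref{thm necessary} only gives $s\ge 1/2$ there, and your proposed Knapp-type construction, as you concede, reproduces only that scaling bound; the ``more refined superposition or focusing configuration'' that would close the gap is precisely the missing content, and you give no mechanism for it. The paper does \emph{not} build a counterexample at all. Instead it argues by contradiction through PDE theory: assuming the maximal estimate $\|U(t)u_0\|_{L^2_xL^\infty_{y,T}}\lesssim\|u_0\|_s$ held for all $s>s_0$ with some $s_0<3/4$, it runs a Ribaud--Vento-style Picard iteration for the two-dimensional \emph{modified} ZK equation \eqref{eq mZK} in Littlewood--Paley-based spaces $X^s_T$ built from Kato smoothing, the $L^\infty_xL^2_{y,T}$ retarded smoothing estimate, and the hypothesized maximal estimate; this yields local well-posedness with a $C^\infty$ data-to-solution map for all $s>s_0-\tfrac12<\tfrac14$, contradicting Kinoshita's theorem that the flow map for mZK in 2D is not $C^3$ below $H^{1/4}$. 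Hence $s_0\ge 3/4$. Without either this indirect argument or an actual counterexample construction reaching $s\ge 3/4$, your proposal proves strictly less than the stated theorem, and the part it misses is the one the paper itself identifies as the most interesting step.
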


In order to prove this theorem, we will use the already existing estimates for the group, which have been noted before (e.g. \cite{Faminskii,RV2,Gru1,Gru2} and others). 
The most interesting part of the proof of the result is the sharpness of the estimates when $d=2,p=2,$ for which we actually resort to PDE methods. 

Indeed, we show that, in case the estimate were true for some $s < \frac{3}{4}$ and $d=2,p=2,$ the Picard iteration method would work for proving local 
well-posedness for the modified Zakharov--Kuznetsov equation in dimension 2 for some $s_0 < \frac{1}{4}$. By a recent result of Kinoshita \cite{Kinoshita2}, however, we know that, similarly 
as in the KdV case, we can only prove local well-posedness by a contraction argument in case $s \ge \frac{1}{4}.$ This contradiction establishes our result

These methods provide us, through Sobolev embedding and an argument by contradiction, with the $s > d \left(\frac{1}{2} - \frac{1}{p}\right)$ range for the second part of 
Proposition \ref{thm necessary}. In order to obtain the range mentioned in that statement, which does better in case $p \le \frac{2(d+1)}{d},$ we employ results by Sj\"olin \cite{Sj1} and Rogers \cite{Rogers} on 
maximal estimates related to Schr\"odinger-like operators. See Section 2 below for more details. 

Next, we establish \emph{Strichartz estimates} for the unitary group $U(t)$ in higher dimensions. We remark that many of those estimates were obtained in the two and three dimensional cases in \cite{LPa}, \cite{LSaut}. In the 
current format, these follow closely \cite{HLRRW}, and are contained in the recent work of Schippa \cite{Schippa} for the critical line $\frac{2}{q} + \frac{2}{r} = 1.$ 

\begin{proposition}\label{thm strichartz} Let $d \ge 3,$ and $q,r \ge 2.$ It holds that 
\[
\| U(t) u_0\|_{L^r_{t \in [0,1]} L^q(\R^d)} \lesssim \| u_0\|_{H^s},
\]
where $\frac{2}{q} + \frac{2}{r} \le 1,$ and $s= d\left( \frac{1}{2} - \frac{1}{q}\right) - \frac{3}{r}.$ 
\end{proposition}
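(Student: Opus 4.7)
The plan is to follow the classical Keel--Tao framework in three steps: derive an $L^1\to L^\infty$ dispersive estimate for $U(t)$, invoke the abstract Strichartz theorem to obtain $L^2$-based Strichartz bounds on the critical admissibility line, and finally pass to the full subcritical range by Sobolev embedding in the spatial variables.

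The key ingredient is the dispersive estimate
\[
\|U(t) u_0\|_{L^\infty(\R^d)} \lesssim |t|^{-d/3}\|u_0\|_{L^1(\R^d)}, \qquad t\neq 0.
\]
To obtain it, I would write the kernel of $U(t)$ as the oscillatory integral
\[
K_t(x,y) = \int_{\R^d} e^{i(x\xi + y\cdot \eta + t\xi^3 + t\xi|\eta|^2)}\, d\xi\, d\eta
\]
and integrate first in $\eta$. Since the phase is quadratic in $\eta$ with Hessian $2t\xi\, I_{d-1}$, the Gaussian integration in $d-1$ variables produces a factor $(t\xi)^{-(d-1)/2}$ together with the residual Airy-type phase $x\xi + t\xi^3 - |y|^2/(4t\xi)$ in the remaining one-dimensional integral. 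The rescaling $\xi = t^{-1/3}\xi'$ absorbs all $t$-dependence into the spatial arguments, yielding
\[
K_t(x,y) = c\, t^{-d/3}\, F(x t^{-1/3},\, y t^{-1/3})
\]
for a universal function $F$, whose boundedness follows from a dyadic/Whitney decomposition around $\xi' = 0$ combined with van der Corput's lemma on each piece.

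Combining this dispersive bound with the energy conservation $\|U(t)f\|_{L^2} = \|f\|_{L^2}$ and applying the abstract Keel--Tao Strichartz theorem yields
\[
\|U(t) u_0\|_{L^\rho_t L^p_x} \lesssim \|u_0\|_{L^2}
\]
for all $(p,\rho)$ on the admissibility line $\tfrac{2d}{3p} + \tfrac{2}{\rho} = \tfrac{d}{3}$, $p,\rho \geq 2$. For a target pair $(q,r)$ in the proposition's range, choose $p$ by $\tfrac{1}{p} = \tfrac{1}{2} - \tfrac{3}{dr}$, making $(p,r)$ admissible in this sense. The proposition's hypothesis $\tfrac{2}{q} + \tfrac{2}{r} \leq 1$ implies $p\leq q$ (crucially using $d\geq 3$), so the Sobolev embedding $W^{s,p}(\R^d) \hookrightarrow L^q(\R^d)$, for $s = d(1/p - 1/q) = d(1/2 - 1/q) - 3/r$, closes the argument:
\[
\|U(t) u_0\|_{L^r_t L^q_x} \lesssim \|D^s U(t) u_0\|_{L^r_t L^p_x} = \|U(t) D^s u_0\|_{L^r_t L^p_x} \lesssim \|D^s u_0\|_{L^2} \lesssim \|u_0\|_{H^s}.
\]

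The main technical obstacle lies in the dispersive estimate: the factor $(t\xi)^{-(d-1)/2}$ fails to be locally integrable at $\xi = 0$ for $d\geq 3$, so bounding $F$ uniformly requires a careful use of the oscillation of $|y|^2/(4t\xi)$ near the singularity, rather than a naive absolute-value estimate. This delicate stationary phase analysis is spelled out in the references \cite{HLRRW, Schippa}, where the sharp decay on the critical admissibility line is obtained; once it is available, Steps 2 and 3 above are routine, and combine into the final estimate as claimed.
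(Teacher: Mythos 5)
Your overall scheme (dispersive estimate $\to$ Keel--Tao $\to$ Sobolev embedding) cannot be run in the loss-free form you propose, because the central claim, the unlocalized dispersive bound $\|U(t)f\|_{L^\infty}\lesssim |t|^{-d/3}\|f\|_{L^1}$, is false for $d\ge 4$ (and already fails logarithmically for $d=3$). The obstruction you flag as a ``technical obstacle'' is in fact fatal: the Hessian of the phase in $\eta$ is $2t\xi I_{d-1}$ and degenerates as $\xi\to 0$. Carrying out the (exact) Fresnel integration in $\eta$ and rescaling gives $K_t(x,y)=t^{-d/3}F(xt^{-1/3},yt^{-1/3})$ with $F(a,b)=c_d\int_{\R}|\xi|^{-\frac{d-1}{2}}e^{i(a\xi+\xi^3-|b|^2/(4\xi))}e^{i\theta_d\,\mathrm{sgn}\,\xi}\,\mmd\xi$, and for $|a|,|b|\ll 1$ the range $\xi\in[|b|^2,c]$ carries essentially no oscillation, so $|F(a,b)|\gtrsim |b|^{-(d-3)}$ for $d\ge4$ (a $\log(1/|b|)$ divergence when $d=3$); thus $F\notin L^\infty$ and no amount of Whitney decomposition or van der Corput will make it bounded. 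One can also see the failure without touching the kernel: if your dispersive bound held, your steps 2--3 (which are correct as implications) would yield the global-in-time, loss-free estimate $\|U(t)u_0\|_{L^r_tL^p_x}\lesssim\|u_0\|_{L^2}$ on the line $\frac{d}{p}+\frac{3}{r}=\frac{d}{2}$; but testing this on $\widehat{u_0}=\mathbf{1}_{\{|\xi|\le\eps,\,|\eta-\eta_0|\le c\}}$, which stays coherent for $|x|,|t|\lesssim\eps^{-1}$, $|y|\lesssim 1$, forces $\eps^{1-\frac1p-\frac1r}\lesssim\eps^{1/2}$, impossible since on that line $\frac1p+\frac1r=\frac12+\frac{d-3}{dr}>\frac12$ for $d\ge4$. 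This degeneracy near the hyperplane $\xi=0$ is precisely why the proposition carries the derivative loss $s=(d-3)\left(\frac12-\frac1q\right)$ on the critical line $\frac2q+\frac2r=1$; a loss-free statement there is not available to be proved.

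Your citation of \cite{HLRRW,Schippa} is also a mis-attribution: what those references (and the paper, in Proposition \ref{prop dispersion}) prove is the \emph{frequency-localized} bound $\bigl|\int\psi(|\zeta|)e^{it\zeta_1|\zeta|^2+iz\cdot\zeta}\,\mmd\zeta\bigr|\lesssim|t|^{-1}$, uniformly in $z$ --- note the decay is only $|t|^{-1}$, independent of $d$ --- not boundedness of your $F$. The paper's route is to rescale this unit-frequency bound to each Littlewood--Paley block, which produces the dispersive estimate \emph{with} the loss $(d-3)\left(1-\frac2p\right)$, then to run $TT^*$ with the Hardy--Littlewood--Sobolev inequality in time on the line $\frac2q+\frac2r=1$ (the loss being absorbed into the fractional derivative), and finally to use Sobolev embedding in space for $\frac2q+\frac2r<1$. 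Your steps 2--3 are the right shape and your final Sobolev bookkeeping reproduces the correct $s$, but the argument must start from the lossy, frequency-localized dispersion; as written, the proof rests on an estimate that is false in exactly the dimensions the proposition is about.
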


In particular, setting $q=r=4$ in this result implies a recent estimate by Herr--Kinoshita \cite{HK} (see also \cite{Kinoshita2}), which enables the authors to prove subcritical local well-posedness for the 
Zakharov--Kuznetsov equation in any dimension:
\[
\| U(t) u_0 \|_{L^4_{x,y,[0,1]}} \lesssim \| \langle \nabla \rangle^{\frac{d-3}{4}} u_0 \|_{L^2},
\]
whenever $d \ge 3.$ An application of Sobolev embedding in the $t-$variable readily implies the following \emph{maximal estimate:} 
\[
\| U(t) u_0 \|_{L^4_{x,y}L^{\infty}_{[0,1]}} \lesssim \| u_0\|_{H^s}, 
\]
whenever $s > \frac{d}{4}.$  Notice that this estimate is, in fact, \emph{sharp} according to Proposition \ref{thm necessary}, as so is the other \emph{space-time maximal estimate} which follows directly from the one above
via Sobolev embedding, this time on the $y-$variable: 
\begin{equation}\label{eq L4d}
\| U(t) u_0 \|_{L^4_x L^{\infty}_{y,[0,1]}} \lesssim \|u_0\|_{H^s},
\end{equation}
for $s > \frac{d}{2} - \frac{1}{4}.$ 

Our final contribution is an usage of maximal estimates in order to prove \emph{well-posedness} for the generalized Zakharov--Kuznetsov equation in higher dimensions:
\begin{equation}\label{eq gZK}
\begin{cases}
\partial_t u + \partial_x \Delta u + \partial_x (u^{k+1}) = 0 \hskip15pt \text{ on } \R^d \times \R; \cr 
 u(x,0) = u_0(x) \hskip75pt \text{ on } \R^d.  
\end{cases}
\end{equation}

The modified and generalized versions of the Zakharov--Kuznetsov equation have been previously considered by many authors. In dimension 2, we mention briefly the works of Biagioni and Linares \cite{BL}, Linares and 
Pastor \cite{LPa,LPa2}, and Farah, Linares and Pastor \cite{FLPa}, which all contributed to the development of the topic. In particular, we mention the work of Ribaud and Vento \cite{RV1} which proved local well-posedness 
for \eqref{eq gZK} whenever $k \ge 4$ in the full subcritical range $ s > 1 - \frac{2}{k},$ and for $s > \frac{5}{12}$ for $k=3,$ $s > \frac{1}{4}$ for $k=2.$ 

It was not until the works of Gr\"unrock \cite{Gru1, Gru2} that local well-posedness was proved in the full subcritical range $s > \frac{1}{3}$ for the $k=3$ case in $d=2,$ and also 
the full subcritical range $s > \frac{3}{2} - \frac{2}{k}$ was reached for any $k \in \Z, \, k \ge 2$ in $d=3.$ Finally, we remark that the subcritical result $s>0$ for $k=2$ was recently shown by Kinoshita 
\cite{Kinoshita1} not to hold if one demands \emph{smoothness} of the flow map, which demonstrates, among other things, that the Ribaud--Vento result for $s>\frac{1}{4}$ in \cite{RV2} was essentially sharp. 
Kinoshita also proved local well-posedness for small data in $s=\frac{1}{4}$ and full subcritical well-posedness $s > \frac{d}{2} - 1$ in any dimension $d \ge 3$ for the modified Zakharov--Kuznetsov equation.
He also shows small data global well-posedness for the critical space $H^{d/2-1}$ for all $d\ge 3.$

Our result, as previously mentioned, complements the results of Herr--Kinoshita \cite{HK}, Kinoshita \cite{Kinoshita2} and Gr\"unrock \cite{Gru1,Gru2}. 

\begin{theorem}\label{thm gZK dk} Let $d \ge 3$ and $k \ge 4.$ Then there are function spaces $\mathcal{X}^s_T$ so that for each $u_0 \in H^s(\R^d)$ with $s > \frac{d}{2} - \frac{2}{k},$ 
the IVP \eqref{eq gZK} has a unique solution 
\[
u \in C([0,T] \colon H^s(\R^d) \cap \mathcal{X}^s_T,
\]
where $T = T(\|u_0\|_s) > 0.$ Moreover, the map $u_0 \mapsto u$ from $H^s(\R^d)$ to $\mathcal{X}^s_T \cap C([0,T] \colon H^s(\R^d))$ is locally Lipschitz continuous. 
\end{theorem}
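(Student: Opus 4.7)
The plan is to run a Picard iteration for the Duhamel formulation
\begin{equation*}
\Phi(u)(t) = U(t) u_0 - \int_0^t U(t-t')\, \partial_x\bigl(u^{k+1}\bigr)(t')\, dt'
\end{equation*}
inside a resolution space $\mathcal{X}^s_T$ whose norm combines the main linear bounds for $U(t)$: (i) an energy component $\|\langle D \rangle^s u\|_{L^{\infty}_T L^2_{x,y}}$ (trivial by unitarity); (ii) a Kato-type smoothing component $\|\langle D \rangle^s \partial_x u\|_{L^{\infty}_x L^2_{y,T}}$, which for $U(t) u_0$ is controlled by $\|u_0\|_{H^s}$ via stationary phase on the phase $\xi(\xi^2 + |\eta|^2)$ (whose $\xi$-derivative has modulus $\gtrsim \xi^2 + |\eta|^2$); (iii) a maximal component of the form $\|\langle D \rangle^{\sigma} u\|_{L^4_x L^{\infty}_{y,T}}$ with $\sigma$ slightly above $d/2 - 1/4$, coming from \eqref{eq L4d}; and (iv) Strichartz-type norms $\|\langle D \rangle^s u\|_{L^r_T L^q_{x,y}}$ furnished by Proposition~\ref{thm strichartz}. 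Together they give $\|U(t)u_0\|_{\mathcal{X}^s_T} \lesssim \|u_0\|_{H^s}$ directly from the linear theory.

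The heart of the argument is the Duhamel nonlinear estimate
\begin{equation*}
\Bigl\|\int_0^t U(t-t')\, \partial_x(u^{k+1})(t')\, dt' \Bigr\|_{\mathcal{X}^s_T} \lesssim T^{\delta}\, \|u\|_{\mathcal{X}^s_T}^{k+1}
\end{equation*}
for some $\delta > 0$. The energy component reduces, via the dual of the Kato smoothing inequality, to controlling $\|\langle D \rangle^s(u^{k+1})\|_{L^1_x L^2_{y,T}}$; the smoothing component reduces, via the double-smoothing ($TT^*$) trick, to the same quantity. I would then apply the fractional Leibniz rule of Kato--Ponce together with H\"older's inequality, distributing four factors of $u$ into the $L^4_x L^{\infty}_{y,T}$ norm from (iii), the remaining $k-3$ factors into suitable Strichartz norms from (iv), and the factor carrying $\langle D \rangle^s$ into the smoothing norm $L^{\infty}_x L^2_{y,T}$ that absorbs the outer $\partial_x$. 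A positive power $T^{\delta}$ is gained from H\"older in time on at least one of the factors. The same bounds, applied to $\Phi(u) - \Phi(v)$, furnish a Lipschitz estimate of the same shape.

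The principal obstacle is the bookkeeping of Sobolev regularity: the critical threshold $s_c = d/2 - 2/k$ lies \emph{below} the necessary condition $s \ge d/2 - 1/p$ for any single maximal norm of type $L^p_x L^{\infty}_{y,T}$ (Proposition~\ref{thm necessary}), so no factor can be controlled at the full $H^s$ level in a maximal norm alone. One must therefore spread the $s$ derivatives carefully across the $k+1$ factors, using each of the maximal and Strichartz norms at a regularity strictly below its own sharp threshold, and relying on the full derivative saved by the smoothing estimate to balance the accounting. The hypothesis $k \ge 4$ enters precisely so that four factors can be absorbed into the sharp $L^4_x L^{\infty}_{y,T}$ norm supplied by \eqref{eq L4d}. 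Once this estimate is in place, the standard Banach fixed-point argument on a small ball of $\mathcal{X}^s_T$, with $T = T(\|u_0\|_{H^s})$ chosen small enough to absorb $T^{\delta}$, yields the unique local solution and the locally Lipschitz dependence of the flow map on the initial data.
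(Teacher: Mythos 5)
Your overall architecture (Picard iteration in a space built from the energy, Kato smoothing $L^\infty_x L^2_{y,T}$ and $L^4_x L^\infty_{y,T}$ maximal components, with the Duhamel term reduced via the dual/retarded smoothing estimates to an $L^1_x L^2_{y,T}$ bound on $u^{k+1}$) is the same as the paper's, which follows Ribaud--Vento. But the step you leave as ``bookkeeping'' is precisely where your scheme breaks, and the paper's actual mechanism is different. Your plan is to put four plain factors of $u$ into $L^4_x L^\infty_{y,T}$ and the factor carrying $\langle D\rangle^s$ into $L^\infty_x L^2_{y,T}$. At the claimed regularity $s$ just above $\frac d2-\frac 2k$ this is impossible for $4\le k\le 8$: the sharp threshold for the $L^4_x L^\infty_{y,T}$ maximal estimate is $s>\frac d2-\frac14$ (Propositions \ref{thm necessary} and \ref{prop maximal estimate}), and $\frac d2-\frac 2k<\frac d2-\frac14$ when $k<8$, so $\|u\|_{L^4_x L^\infty_{y,T}}$ is simply not controlled by the resolution norm — only $\|\langle D\rangle^{(s-s_d)^+}u\|_{L^4_xL^\infty_{y,T}}$ with a \emph{negative} order derivative is. Even ignoring this, your H\"older exponents do not close: with $\langle D\rangle^s u\in L^\infty_x L^2_{y,T}$ and four factors in $L^4_xL^\infty_{y,T}$, the $x$-integrability is exhausted, so the remaining factors would have to sit in $L^\infty_{x,y,T}$, which costs $s>\frac d2$; and your count $4+(k-3)+1=k+2$ exceeds the $k+1$ factors available. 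The Strichartz norms you add cannot rescue this directly either, since they carry the time norm outermost and at best convert via Minkowski. The paper avoids all of this by never placing an undifferentiated factor in a maximal norm: it works with a Littlewood--Paley paraproduct decomposition of $\Delta_j(u^{k+1})$, interpolates the maximal and smoothing components of $\mathcal{Y}^s_T(d)$ to manufacture intermediate norms $L^p_xL^q_{y,T}$ with finite $q$ and derivative weights adapted to $s$, keeps the $2^{sj}$ weight on the highest-frequency factor, and controls the low-frequency blocks $P_l u$ by Sobolev embedding in the $x$-variable — and it is exactly this embedding step, requiring $\frac14-\frac1k\ge 0$ in \eqref{eq many indices}, that forces $k\ge 4$, not the absorption of four factors into $L^4_xL^\infty_{y,T}$.

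Two further gaps. First, your appeal to Kato--Ponce in the anisotropic norm $L^1_xL^2_{y,T}$ is problematic: the fractional Leibniz rule at an $L^1$ endpoint in mixed norms is not available off the shelf, which is another reason the paper works with paraproducts on dyadic blocks instead. Second, the maximal component of the Duhamel term requires a \emph{retarded} estimate mapping $L^1_xL^2_{y,T}$ into $L^4_xL^\infty_{y,T}$ for the truncated integral $\int_0^t$; composing the maximal estimate with dual smoothing only gives the untruncated $\int_0^T$ version, and passing to the retarded one needs the (anisotropic) Christ--Kiselev lemma, as in Proposition \ref{prop ret max} — a step absent from your outline. Finally, the $T^\delta$ gain in the paper comes from interpolating the dyadic estimates with a trivial $L^N_{x,y,T}$ bound for large $N$, not merely ``H\"older in time on one factor''; some such device must be made explicit for the contraction to close.
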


In order to prove this result, we will use \eqref{eq L4d} in conjunction with a local smoothing estimate. This is heavily inspired in the works of Ribaud and Vento \cite{RV1, RV2} and Gr\"unrock \cite{Gru1, Gru2}, and 
we only miss the full subcritical range in the $k=3$ case because of the lack of a sharp $L^2_x L^{\infty}_{y,T}$ estimate for $U(t);$ see the comments in the last section for more details. 

Finally, we use the same $H^{\frac{d-3}{4}^+} \to L^4_{x,y,T}$ estimate in order to prove a result about \emph{pointwise convergence} of the flow to the initial data. 
\begin{theorem}\label{thm pointwise} 
Let $d, k \in \N$ as before. Let also $k \ge 2$ if $d \in \{2,3\}$ and $k \ge 4$ if $d \ge 4,$ and $s > \max\left(\tilde{s}_d, \frac{d}{2} - \frac{2}{k}\right),$ where 
\[
\tilde{s}_d = \begin{cases} 
              \frac{1}{2} & \text{ if } d =2,3; \\
              \frac{d}{4} & \text{ if } d\ge 4. \
              \end{cases}
\]
Then, for each initial datum $u_0 \in H^s(\R^d),$ the unique solution $u \in C([0,T] \colon H^s(\R^d))$ to the IVP \eqref{eq gZK} given by Theorem \ref{thm gZK dk} converges \emph{pointwise} to $u_0;$ i.e.,
\[
\lim_{t \to 0} u(x,t) = u_0(x), \, \text{ for a.e. } x \in \R^d.
\]
\end{theorem}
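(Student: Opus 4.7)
The plan is to prove a.e.\ pointwise convergence by the now-standard Duhamel splitting argument. Using the local solution furnished by Theorem \ref{thm gZK dk}, write
\[
u(t,x) \;=\; U(t) u_0(x) + \int_0^t U(t-t')\,\partial_x\bigl(u^{k+1}\bigr)(t')\,dt'
\;=:\; U(t)u_0(x) + \mathcal{N}[u](t,x),
\]
and establish separately that (i) $U(t)u_0 \to u_0$ pointwise a.e.\ as $t\to 0^+$, and (ii) $\mathcal{N}[u](t,\cdot)\to 0$ pointwise a.e.\ as $t\to 0^+$.

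For (i) the classical sublinearity/density argument reduces matters to a maximal estimate of the form $\|U(t)u_0\|_{L^p_{x,y}L^\infty_{t\in[0,1]}}\lesssim \|u_0\|_{H^s}$ with $p < \infty$, or even to a localized-in-space version of it, since a.e.\ convergence is a purely local property. For $d\ge 4$, the relevant bound is $\|U(t)u_0\|_{L^4_{x,y}L^\infty_t}\lesssim \|u_0\|_{H^s}$ for $s>d/4=\tilde s_d$, which I would derive from the Strichartz estimate of Proposition \ref{thm strichartz} with $q=r=4$ together with a Sobolev embedding in the $t$ variable (exploiting that $\partial_t$ acts as a cubic multiplier on the Fourier side). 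For $d\in\{2,3\}$ the threshold $\tilde s_d = 1/2$ would come from a local maximal estimate of Kenig--Ponce--Vega / Vega type, adapted to the ZK symbol: on a bounded spatial region, the oscillation of $e^{it\xi(\xi^2+|\eta|^2)}$ should afford the exact analog of the $H^{1/2+}$ Schr\"odinger local maximal estimate, which is enough for the density argument.

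For (ii), the goal is to show $\mathcal{N}[u]\in C([0,T]; H^{\sigma}(\R^d))$ for some $\sigma > d/2$; then the Sobolev embedding $H^\sigma\hookrightarrow C_0$, together with $\mathcal{N}[u](0)=0$ and continuity in $t$, forces $\|\mathcal{N}[u](t,\cdot)\|_{L^\infty_x}\to 0$, yielding uniform, hence pointwise, convergence to $0$. The required smoothing gain $s\to\sigma$ is extracted from the dispersive/maximal bounds already used in proving Theorem \ref{thm gZK dk}: the outer derivative $\partial_x$ in front of $u^{k+1}$ is absorbed by a one-derivative local-smoothing gain in $x$, while the $k+1$ factors of $u$ are distributed via H\"older into the maximal norm \eqref{eq L4d} and the Strichartz companion norm, the $H^s$ regularity being placed on a single factor.

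The main technical obstacle is to verify that, in each regime delimited by $\max(\tilde s_d, d/2 - 2/k)$, the smoothing gain is actually strong enough to push the Duhamel piece strictly above $d/2$. For $d\in\{2,3\}$ with small $k$, the hypothesis $s>1/2$ is tight: the multilinear bookkeeping must be precise, with the derivative gain from local smoothing carefully matched against the derivative cost coming from $\partial_x(u^{k+1})$. For $d\ge 4$ and $k\ge 4$, the large exponent $k+1\ge 5$ combined with the $L^4_x L^\infty_{y,t}$ maximal estimate \eqref{eq L4d} makes the argument more robust, so the Sobolev threshold $\sigma>d/2$ is cleared with room to spare; combining (i) and (ii) then closes the proof.
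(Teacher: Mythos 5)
Your decomposition $u = U(t)u_0 + \mathcal{N}[u]$ is genuinely different from the paper's argument, but step (ii) contains the real gap: you assert, without proof, that $\mathcal{N}[u] \in C([0,T];H^{\sigma})$ for some $\sigma > d/2$. At the stated thresholds this is a strong nonlinear smoothing claim that nothing in the paper (or in your sketch) supplies. For $d \ge 4$, $k \ge 4$ the hypothesis allows $s$ arbitrarily close to $\frac{d}{2}-\frac{2}{k}$, so you would need the Duhamel term to gain strictly more than $\frac{2}{k}$ derivatives \emph{in addition to} absorbing the derivative in $\partial_x(u^{k+1})$; the retarded Kato-smoothing estimate (Proposition \ref{prop ret group}) recovers exactly one derivative against the $L^1_xL^2_{y,T}$ norm, and the multilinear estimate $\|\Delta_j(u^{k+1})\|_{L^1_xL^2_{y,T}}$ used in the proof of Theorem \ref{thm gZK dk} closes only at the regularity $s$ of the data, with no surplus. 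The situation is worse in the low-dimensional cases you call ``tight'': for $d=3$, $k=2$, $s$ just above $\tfrac12$, your scheme requires $\mathcal{N}[u]\in H^{\sigma}$ with $\sigma>\tfrac32$, i.e.\ a gain of a full derivative on a derivative nonlinearity at regularity barely above $H^{1/2}$ --- an Erdo\u{g}an--Tzirakis-type smoothing statement that is not known and certainly not a consequence of the estimates at hand. Flagging this as ``the main technical obstacle'' does not discharge it; as written, the proof of (ii) is missing. (Your step (i) is also under-justified for $d\in\{2,3\}$: the $H^{1/2+}$ local maximal estimate does not follow by mere analogy with Schr\"odinger, though it is true --- the paper obtains it from Cowling's theorem combined with Rogers' local-to-global transference, so this part is repairable.)

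The paper avoids any nonlinear smoothing altogether. It approximates $u$ by the smooth solutions $u_N$ of the frequency-truncated problem \eqref{eq gZK smooth} with data $P_Nu_0$, and reduces the theorem (Lemmas \ref{lemma maximal} and \ref{lemma maximal 3}) to showing $\|u-u_N\|\to 0$ in a maximal-type norm ($L^4_{x,y}L^{\infty}_T$, or a localized $L^2_xL^{\infty}_T$ norm when $d=3$, $k=2$). That convergence only requires estimating differences of solutions at the \emph{same} regularity $s$, which is exactly what the Lipschitz well-posedness theory of Theorem \ref{thm gZK dk} (resp.\ Gr\"unrock's trilinear $X^{s,b}$ estimate in the $d=3$, $k=2$ case) together with the retarded maximal estimate of Proposition \ref{prop ret max} provides; a Chebyshev argument then transfers the everywhere convergence of the smooth $u_N$ to a.e.\ convergence of $u$. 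If you want to salvage your linear/Duhamel splitting, you would have to either prove the requisite smoothing gain for $\mathcal{N}[u]$ (a separate, substantial result) or weaken step (ii) to an approximation statement of the paper's kind, at which point you have essentially reproduced their proof.
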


Increasing attention has been given to such kinds of results in the recent literature, especially in the KdV case, where we mention the works of Erdo\u{g}an and Tzirakis both in the KdV and Schr\"odinger cases \cite{ET1, ET2}, as well as the very recent 
results of Compaan, Luc\`a and Staffilani \cite{CLS}, which were responsible for establishing such results also in the context of the nonlinear Schr\"odinger equation in in higher dimensions in a sharp range, using the sharp 
pointwise convergence theorem for the Schr\"odinger flow \cite{DGL, DZ}. In our companion paper \cite{LinaresRamos}, we establish such a pointwise convergence result also in the case of the 
$L^2-$critical generalized Zakharov--Kuznetsov equation in three dimensions, i.e., $k=\frac{4}{3}$ in \eqref{eq gZK}. There, our methods give us as range for well-posedness $s > \frac{3}{4},$ which coincides with 
the range in which the $H^s \to L^4_{x,y} L^{\infty}_T$ maximal estimates hold. 

\subsection*{Notation and Organization} We use the modified Vinogradov equation $A \lesssim B$ several times to indicate that there is an absolute constant $C>0$ so that $A \le C \cdot  B.$ We also use the original Vinogradov equation
$A \ll B$ to denote that there is a (relatively) large constant $C$ with the property $A \le C \cdot B.$ We also use several times the notation $(-\Delta)^{s/2} f = \langle \nabla \rangle^s f = \mathcal{F}^{-1} ( (1+|\xi|^2+|\eta|^2)^{s/2} \widehat{f}).$ 

Finally, the paper is organized as follows. In Section 2, we discuss the counterxamples and the proof of Proposition \ref{thm necessary} and Theorem \ref{thm sharp}. In Section 3, we prove Proposition \ref{thm strichartz}, followed 
by a discussion on how the results in those sections imply well-posedness for the generalized Zakharov--Kuznetsov in higher dimensions and a proof of Theorem \ref{thm pointwise}. 
Finally, in Section 4, we discuss some generalizations, remarks and open questions which arise naturally from our discussion.

\section{Proof of Theorem \ref{thm sharp}}\label{sec sharp}

In this section, we will prove Theorem \ref{thm sharp} by first showing, via an elementary counterexample, Proposition \ref{thm necessary}. We will then show that the necessary conditions provided
by that proposition are, in fact, \emph{sharp} in certain instances when $d=2,3.$ The proof of such sharpness assumes previous results in the literature, and, as stated earlier, 
also an argument involving the modified Zakharov--Kuznetsov equation in two dimensions. 

\subsection{Proof of Proposition \ref{thm necessary}} 
As mentioned in the introduction, we construct a simple set of counterexamples to prove Proposition \ref{thm necessary}.
First, define the Schwartz functions $\varphi_{j,k}$ on the Fourier side as 
\[
 \widehat{\varphi_{j,k}}(\xi,\eta) = \theta(2^j \xi) \psi (2^{-k}\eta),
\]
where $\theta$ is a smooth one-dimensional function supported at $[-4,-1/2] \cup [1/2,4]$, equal to $1$ on $[-2,-1]\cup[1,2],$ and $\psi$ is a $(d-1)-$dimensional 
counterpart of $\theta.$ If $j \ge -k,$ a simple calculation implies that 
\[
\| \varphi_{j,k} \|_s^2 \sim 2^{(d-1)k-j} 2^{2ks}.
\]
On the other hand, for $|y| \ll 2^{-k}, \, |t| \ll \min(1,2^{j-2k}),$ and $|x| \ll 2^j,$ the Fourier transform definition of $U(t)$ implies that 
\[
|U(t) \varphi_{j,k} (x,y)| \gtrsim 2^{(d-1)k-j}. 
\]
This holds as the smallness assumptions are basically cancelling off the phase in the inverse Fourier transform definition of $U(t)\varphi_{j,k}.$ Thus, 
\[
2^{(d-1)k-j+\frac{j}{p}} \lesssim \| U(t)\varphi_{j,k} \|_{L^p_x L^{\infty}_{y,t}} \lesssim \| \varphi_{j,k} \|_s = 2^{((d-1)k-j)/2} 2^{ks}. 
\]
In other words, $2^{\frac{d-1}{2}k +j(1/p - 1/2)} \lesssim 2^{ks}.$ Here we have set ourselves the freedom to choose $j \ge -k.$ If $p<2,$ we can just let $j \to \infty$ 
to show that, in fact, no inequality of the type \eqref{eq maximal space time} can hold. For $p \ge 2,$ the worst case scenario happens when $j = -k,$ so that 
\[
2^{\frac{d-1}{2}k +k(1/2 - 1/p)} \lesssim 2^{ks}, \forall k \ge 1 \iff s \ge \frac{d}{2} - \frac{1}{p}.
\]
This proves the first part of \ref{thm necessary}. In order to prove the second one, assume that the inequality 
\[
\| \sup_t |U(t)u_0| \|_p \lesssim \|u_0\|_r
\]
holds with $p>1.$ By Sobolev embedding, this estimate implies that \eqref{eq maximal space time} holds for all $s > r+\frac{d-1}{p}.$ Therefore, 
\[
r \ge d \left( \frac{1}{2} - \frac{1}{p}\right). 
\]
This finishes the proof of the first lower bound on the second part. In order to prove the $s> \frac{3}{2p} - \frac{n}{2} \left(\frac{1}{2} - \frac{1}{p}\right)$ restriction, we shall use 
two different results. The first of them is the following bound obtained by Sj\"olin in \cite[Theorem~1]{Sj1}.

\begin{theoo}\label{theoo a} Suppose that $\Omega:\R^d \to \R$ is a smooth homogeneous polynomial of degree $\ge 1.$ Then the estimate 
\[
\left\| \sup_{t \in [0,1]} |e^{it\Omega(D)} f| \right\|_{L^p(B^d(0,1))} \lesssim \|f\|_{H^s(\R^d)}
\]
can only hold if $s + \frac{n-1}{2p} \ge \frac{n}{4}.$ Here, we let $\,\widehat{\Omega(D)f}(\zeta) = \Omega(\zeta) \widehat{f}(\zeta).$ 
\end{theoo}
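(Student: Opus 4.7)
My plan is to construct a Knapp-type wave-packet counterexample tailored to the characteristic variety of $\Omega$. Let $m = \deg \Omega \ge 2$ (the degree-$1$ case is trivial since $e^{it\Omega(D)}$ is then a translation), and fix a direction $e \in S^{n-1}$ on which $v := \nabla\Omega(e) \neq 0$ (such an $e$ exists because $\nabla\Omega$ is a nontrivial homogeneous polynomial map of positive degree), together with an auxiliary time $t_0 \in (0,1)$. For a large parameter $N$ I would consider the function $f_N$ defined on the Fourier side by
\[
\widehat{f_N}(\zeta) \;=\; e^{-i t_0\, \Omega(\zeta)}\,\chi\!\left(\tfrac{\zeta - N e}{N^{1/2}}\right),
\]
where $\chi$ is a smooth bump supported in the unit ball. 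The role of the phase factor $e^{-it_0 \Omega(\zeta)}$ is to cancel the leading oscillation at time $t_0$; the scaling $N^{1/2}$ of the frequency packet is dictated by balancing the linear and quadratic corrections in the Taylor expansion of $\Omega$ about $N e$.

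A direct computation gives $\|f_N\|_{H^s} \approx N^{s+n/4}$, since $\widehat{f_N}$ lives on a ball of radius $\approx N^{1/2}$ on which $|\zeta| \approx N$. To analyze $e^{it\Omega(D)} f_N(x)$, I would change variables $\zeta = N e + u$ and use homogeneity to expand
\[
\Omega(Ne + u) \;=\; N^m \Omega(e) + N^{m-1}\, v\cdot u + \tfrac{1}{2}\, N^{m-2}\, u^{\top} H\Omega(e)\, u + O\!\bigl(N^{m-3}|u|^3\bigr).
\]
For $|t - t_0| \lesssim N^{1-m}$ and $|u| \lesssim N^{1/2}$, the quadratic term in $(t-t_0)\Omega$ is of order $|t-t_0|\,N^{m-2}|u|^2 = O(1)$ while the cubic and higher-order contributions are $O(N^{-1/2})$. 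After extracting a global phase, the only genuinely oscillatory factor left in the integrand is $\exp\!\bigl(i\,(x + (t-t_0)N^{m-1} v)\cdot u\bigr)$, from which one obtains the pointwise lower bound
\[
|e^{it\Omega(D)} f_N(x)| \;\gtrsim\; N^{n/2} \quad\text{whenever }\; |t-t_0|\lesssim N^{1-m}\ \text{and}\ |x + (t-t_0)N^{m-1} v|\lesssim N^{-1/2}.
\]

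As $t$ ranges over an interval of length $\approx N^{1-m}$ centered at $t_0$, the admissible $x$'s sweep out a tube of length $\approx |v|$ in the direction of $v$ and transverse radius $\approx N^{-1/2}$. For sufficiently small implicit constants this tube sits inside $B(0,1)$, so its Lebesgue measure is $\approx N^{-(n-1)/2}$. Integrating the lower bound produces
\[
\left\|\sup_{t\in [0,1]} |e^{it\Omega(D)} f_N|\right\|_{L^p(B(0,1))} \;\gtrsim\; N^{\,n/2 - (n-1)/(2p)}.
\]
Inserting this and $\|f_N\|_{H^s} \approx N^{s+n/4}$ into the hypothetical estimate and letting $N \to \infty$ forces $n/2 - (n-1)/(2p) \le s + n/4$, i.e., $s + \tfrac{n-1}{2p} \ge \tfrac{n}{4}$, as claimed.

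The main technical step is verifying that, on the window $|t-t_0|\lesssim N^{1-m}$, the quadratic contribution in the Taylor expansion is genuinely bounded and the cubic remainder is $o(1)$; this is precisely the heuristic that forces the packet width $N^{1/2}$, and once it is done the rest is elementary measure-theoretic bookkeeping.
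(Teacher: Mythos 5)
Your construction is correct, and it is in the same spirit as the source of this statement: the paper does not prove Theorem~\ref{theoo a} at all but imports it from Sj\"olin \cite[Theorem~1]{Sj1}, whose argument is likewise a concentration/Knapp-type counterexample, so your proposal amounts to a valid self-contained reproduction of the cited result. The bookkeeping checks out: with $\widehat{f_N}$ a bump of radius $N^{1/2}$ about $Ne$ (so $\|f_N\|_{H^s}\sim N^{s+n/4}$), the $j$-th order term of the Taylor expansion of $(t-t_0)\Omega(Ne+u)$ is $O(N^{1-m}\cdot N^{m-j}\cdot N^{j/2})=O(N^{1-j/2})$ on the window $|t-t_0|\lesssim N^{1-m}$, so the packet coheres and sweeps a tube of unit length and radius $N^{-1/2}$, giving exactly $s+\frac{n-1}{2p}\ge\frac{n}{4}$. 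Two small points to tighten. First, a quadratic phase that is merely $O(1)$ does not yield the lower bound $|e^{it\Omega(D)}f_N|\gtrsim N^{n/2}$, since the integrand could then oscillate; you need the total phase variation small (say $\le 1/10$), which you get by taking $|t-t_0|\le cN^{1-m}$ and $|x+(t-t_0)N^{m-1}v|\le cN^{-1/2}$ with $c$ a small constant depending on $\Omega$ --- this also shortens the tube only by a constant factor and keeps it inside $B^d(0,1)$, so nothing is lost; your closing remark gestures at this but should say ``small'', not ``bounded''. Second, there is no need to set aside the degree-one case: for $m=1$ the Hessian and all higher terms vanish, the admissible time window is all of $[0,1]$, and the identical packet (with no phase correction) sweeps the same tube, so the argument covers $m\ge 1$ uniformly; as written, ``trivial since it is a translation'' leaves that case formally unproved. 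Finally, note the statement's $n$ is the dimension $d$; with that reading your exponents match the claim.
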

As an immediate corollary, we see that the bound 
\[
\| \sup_{t \in [0,1]} |U(t)f| \|_{L^p(B^d(0,1))} \lesssim \|f\|_{H^s(\R^d)}
\]
can only hold if $s + \frac{n-1}{2p} \ge \frac{n}{4}.$ In order to pass from the unit ball to the whole euclidean space, we must use the following result by Rogers \cite[Theorem~1.3]{Rogers}:

\begin{theoo}\label{theoo b} Let $p \ge 2$ and $\Omega:\R^d \to \R$ be a smooth function so that, for some integer $m \ge 2,$ it holds that $|D^{\alpha} \Omega(\zeta)| \lesssim_{\alpha} |\zeta|^{m - |\alpha|}$ for all
multiindices $|\alpha| \le 2,$ and that $|\nabla \Omega (\zeta)| \gtrsim |\zeta|^{m-1}.$ Then the \emph{local} maximal estimate 
\[
\left\| \sup_{t \in [0,1]} |e^{it\Omega(D)} f| \right\|_{L^p(B^d(0,1))} \lesssim \|f\|_{H^s(\R^d)}
\]
holds for all $s > s_0$ if and only if the \emph{global} maximal estimate
\[
\left\| \sup_{t \in [0,1]} |e^{it\Omega(D)} f| \right\|_{L^p(\R^d)} \lesssim \|f\|_{H^s(\R^d)}
\]
holds for all $s > m\,s_0 - (m-1)d \left(\frac{1}{2} - \frac{1}{p}\right).$ 
\end{theoo}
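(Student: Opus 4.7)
The implication ``global $\Rightarrow$ local'' is immediate by restricting the global $L^p(\R^d)$ norm to $B(0,1)$. The content is the reverse direction, and I would approach it via an anisotropic rescaling adapted to the degree-$m$ behaviour of $\Omega$, combined with a Littlewood--Paley decomposition $f = \sum_N P_N f$ with $\widehat{P_N f}$ supported in $|\xi| \sim N$.

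For each frequency-$N$ piece, let $g(y) := N^{-d/2}(P_N f)(y/N)$, which is frequency-localized at $|\xi| \sim 1$ with $\|g\|_{L^2} = \|P_N f\|_{L^2}$. The derivative bounds on $\Omega$ make $\Omega(\eta/N) \approx N^{-m}\Omega(\eta)$ on the shell $|\eta| \sim N$ with controllable error, yielding the intertwining
\[
e^{i\tau \Omega(D)} g(y) \approx N^{-d/2}\, [e^{i\tau N^{-m} \Omega(D)}(P_N f)](y/N).
\]
Applying the local hypothesis to $g$ at any threshold $s > s_0$ (for which $\|g\|_{H^s} \sim \|g\|_{L^2}$ by the frequency localization) and unwinding the change of variables gives the short-time, small-ball estimate
\[
\bigl\| \sup_{t \in [0, N^{-m}]} |e^{it \Omega(D)} P_N f| \bigr\|_{L^p(B(x_0, 1/N))} \lesssim N^{d/2 - d/p} \|P_N f\|_{L^2}
\]
valid for every $x_0 \in \R^d$ by translation invariance in $x$.

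To upgrade this to the estimate on $\R^d \times [0,1]$ I would perform two aggregations. First, I would partition $[0,1]$ into $\sim N^m$ subintervals of length $N^{-m}$; the $L^2$-isometry of $e^{it\Omega(D)}$ shifts each interval to $[0, N^{-m}]$ while preserving the data norm, and $\ell^p$-summation produces a temporal factor $N^{m/p}$. Second, I would decompose $P_N f$ into wave packets at the Heisenberg-dual scale $(1/N, N)$; the lower bound $|\nabla\Omega(\zeta)| \gtrsim |\zeta|^{m-1}$ guarantees that each packet travels at group velocity $\sim N^{m-1}$ and therefore visits $\sim N^m$ distinct $1/N$-cells over $[0,1]$. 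Combining these with an $\ell^2 \hookrightarrow \ell^p$ embedding (valid since $p \ge 2$) collapses the packet sum to $\|P_N f\|_{L^2}$; a careful tally of $N$-powers then yields the shifted threshold $s_{\mathrm{global}} = m s_0 - (m-1) d (1/2 - 1/p)$, and the $N^{0^+}$ slack obtained by taking $s$ strictly larger than this threshold allows the final dyadic summation.

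The principal obstacle is the interaction between the supremum in time and the wave-packet orthogonality: the naive triangle inequality loses catastrophically in the packet count, and the sharp exponent is recovered only via a more refined bilinear (or square-function) argument that exploits the transversality of wave packets with distinct group velocities. Secondary difficulties---the error terms from the non-homogeneity of $\Omega$ and the loss of exact frequency localization upon physical-space cutoff---are absorbed respectively by the derivative hypothesis on $\Omega$ and by standard Schwartz-tail estimates.
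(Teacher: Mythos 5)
You should first note that the paper does not prove this statement at all---it is quoted from Rogers (Theorem~1.3 in \cite{Rogers}) and used as a black box---so there is no internal proof to compare with; judged on its own terms, your proposal has two genuine gaps. The first is your opening claim that ``global $\Rightarrow$ local'' is immediate by restriction. In the only regime where either estimate can hold one has $s_0 \ge d(\tfrac12-\tfrac1p)$ (focusing example, using $|\Omega(\zeta)|\lesssim|\zeta|^m$), hence $m s_0-(m-1)d(\tfrac12-\tfrac1p)\ge s_0$: restricting the global estimate to $B^d(0,1)$ only yields the local estimate for $s$ above the \emph{larger} global threshold, not down to $s_0$. This direction therefore has real content (for Schr\"odinger with $d=1$, $p=2$ it says the global $s>1/2$ bound implies Carleson's local $s>1/4$ bound) and needs its own scaling argument: the global hypothesis, rescaled, gives the frequency-$N$ bound with constant $N^{s_0+\e}$ but only over times of length $N^{-(m-1)}$, and one passes to $t\in[0,1]$ on the unit ball by a Lee-type temporal localization exploiting $|\nabla\Omega(\zeta)|\gtrsim|\zeta|^{m-1}$ (packets exit the ball after time $\sim N^{-(m-1)}$, the data relevant to different time windows is spatially almost disjoint, and $p\ge 2$ sums the pieces). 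Crucially, this is exactly the direction the paper uses, feeding Sj\"olin's \emph{local} necessary condition through Theorem~\ref{theoo b} to constrain the \emph{global} estimate; if it were mere restriction, the paper's conclusion would not follow.

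Second, in the forward direction your rescaling goes the wrong way. Scaling $P_Nf$ down to frequency $\sim 1$ makes the hypothesis see only $\|g\|_{H^s}\sim\|g\|_{L^2}$, so your ``short-time, small-ball estimate'' $\|\sup_{t\in[0,N^{-m}]}|e^{it\Omega(D)}P_Nf|\|_{L^p(B(x_0,1/N))}\lesssim N^{d/2-d/p}\|P_Nf\|_2$ is just a Bernstein bound containing no trace of $s_0$; no subsequent time-slicing (with its $N^{m/p}$ loss) and wave-packet counting can then produce the $s_0$-dependent threshold $ms_0-(m-1)d(\tfrac12-\tfrac1p)$, since the hypothesis has effectively been discarded. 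The correct move is the opposite rescaling: apply the unit-scale local estimate to $g(y)=(P_Nf)(N^{m-1}y)$, of frequency $\sim N^m$, so that $B^d(0,1)$ corresponds to a ball of radius $N^{m-1}$---the distance travelled in unit time---and the rescaled time interval lies inside $[0,1]$ automatically; the Jacobian factors $N^{(m-1)d(1/p-1/2)}$ together with $(N^m)^{s_0+\e}$ give exactly the claimed exponent on each such ball. One then covers $\R^d$ by balls of radius $N^{m-1}$, localizes the data to dilates of these balls using the derivative upper bounds (finite propagation speed up to rapidly decaying tails), and sums with $\ell^2\subset\ell^p$, $p\ge2$. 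In particular no bilinear or transversality refinement of a wave-packet decomposition is required: the ``principal obstacle'' you identify is an artifact of the wrong scaling, and as written that step is an acknowledged hole rather than a proof. (Your approximate homogeneity claim $\Omega(\eta/N)\approx N^{-m}\Omega(\eta)$ also does not follow from the stated derivative bounds, though it is harmless for the paper's homogeneous symbol $\xi(\xi^2+|\eta|^2)$; the scaling steps really require either homogeneity or uniformity of the hypothesis over the scale-invariant symbol class.)
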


In our case, it is easy to see that $\Omega(\xi,\eta) = \xi(\xi^2 + |\eta|^2)$ satisfies the hypotheses above with $m = 3.$ Therefore, Theorems \ref{theoo a} and \ref{theoo b} imply together that the 
time-only maximal estimate \ref{eq maximal time only} can only hold whenever 
$ s > 3 \left(\frac{d}{4} - \frac{d-1}{2p}\right) - 2d \left(\frac{1}{2} - \frac{1}{p}\right) = \frac{3}{2p} - \frac{d}{2} \left(\frac{1}{2} - \frac{1}{p}\right).$ This 
completes our proof.

\subsection{Analysis of the three-dimensional maximal estimate} In three-dimensions, the existing results are (essentially) sharp and completely compatible with the counterexamples
given above. Indeed, it is not complicated to prove that, for $p \to \infty,$ Faminskii's methods imply that 
\begin{equation}\label{eq almost linfty}
\| U(t) u_0 \|_{L^{\infty-}_{x,y,[0,1]}} \lesssim \| u_0 \|_{s}, \, s > 3/2.
\end{equation}
Indeed, \eqref{eq almost linfty} follows by a simple Sobolev embedding argument: 
\[
\| U(t) u_0 \|_{L^N_{x,y,t \in [0,1]}} \lesssim \| \langle \nabla_{x,y} \rangle^{\frac{3(N-2)}{2N}} U(t) u_0\|_{L^N_{[0,1]} L^2_{x,y}} \le \|u_0\|_{H^{\frac{3(N-2)}{2N}}}.
\]
We are then allowed to directly interpolate this bound with Ribaud--Vento \cite[Proposition~3.3]{RV2}, which states
\[
\| U(t) u_0 \|_{L^2_x L^{\infty}_{y,[0,1]}} \lesssim \|u_0\|_{H^s}, \, s > 1.
\]
This yields the result directly by letting $N \to \infty.$ 

Alternatively, one can also follow Faminskii's approach. We are then required to estimate functions of the form 
\[
I_k(x,y,t) = \int_{\R^3} e^{i(x\xi + y \cdot\eta + t \xi(\xi^2+|\eta|^2))} \phi(2^{-k}(\xi,\eta)) \, \mmd \xi \, \mmd \eta,
\]
where $\phi$ is a smooth indicator function of the annulus $\{|(\xi,\eta)| \sim 1\}.$ We clearly have $\|I_k\|_{L^{\infty}_{x,y,t}} \le 2^{3k}.$ By \cite[Lemma~3.3]{RV2},
we have $\|I_k\|_{L^1_x L^{\infty}_{y,t}} \lesssim \text{poly}(k) 2^{2k}.$ Interpolation yields 
\[
\| I_k \|_{L^p_x L^{\infty}_{y,t}} \lesssim \text{poly}(k) 2^{2k + (1-\theta)k},
\]
where $p = \frac{1}{\theta}.$ An application of the $TT^*$ method and a Littlewood--Paley analysis yields 
\[
\| U(t)u_0\|_{L^q_x L^{\infty}_{y,t}} \lesssim \|u_0\|_s, \, s > 1 + \frac{1-\theta}{2}, \, \text{ for } q = \frac{2}{\theta}.
\]
On the other hand, inequality \eqref{eq maximal space time} only holds if $s \ge \frac{3}{2} - \frac{1}{p}.$ Setting $p=q=\frac{2}{\theta}$ gives us that the result is, indeed, sharp, up to
the endpoint, as we wished. 

\subsection{Analysis of the two-dimensional maximal estimate}\label{sec 2d counter} For the case of $d=2,$ we have a distinction to make: 

\noindent If $p \ge 4,$ then the counterexample we gave yields the sharp bound, as 
Linares--Pastor and Gr\"unrock \cite{LPa,Gru1} show that 
\[
\| U(t) u_0 \|_{L^4_x L^{\infty}_{y,t}} \lesssim \| u_0\|_s, \, s > 3/4. 
\]
By the Sobolev embedding argument from the previous section, one easily sees that the range of $s$ one gets matches with that of our counterexample if $p \ge 4,$ except for the endpoint $s = 1 - \frac{1}{p}.$ 

On the other hand, if $p \in [2,4),$ the subject is much more delicate. Indeed, we will show that Faminskii's result
\begin{equation}\label{eq faminskii}
\| U(t)u_0 \|_{L^2_x L^{\infty}_{y,t}} \lesssim \|u_0\|_s, 
\end{equation}
for $s > 3/4,$ is optimal, up to the endpoint $s=3/4.$ Instead of building an explicit counterexample, we argue by contradiction. 

In fact, suppose that \eqref{eq faminskii} holds for $s > s_0, \, 3/4 > s_0.$ We follow Ribaud--Vento's ideas for the well-posedness of the modified Zakharov--Kuznetsov equation \cite{RV1}. More precisely,
we wish to construct a solution to 
\begin{equation}\label{eq mZK}
\begin{cases} 
 \partial_t u + \partial_x \Delta u + \partial_x(u^3) = 0 & \text{ on } \R \times \R^2, \cr 
 u(x,0) = u_0(x) & \text{ on } \R^2 \cr
\end{cases}
\end{equation}
for all $u_0 \in H^s,$ with $s$ to be specified in a while. The idea is that, if $s$ is sufficiently small and we are able to prove that the data-to-solution map $u_0 \mapsto u$ above is, in fact, 
\emph{smooth,} then we will directly contradict Kinoshita's result, which shows that the data-to-solution mapping cannot possess $C^3$ regularity in the case of the modified Zakharov--Kuznetsov equation in 
dimension 2. 

To that extent, define the auxiliary norms
\[
\| u \|_{Y^s_T} = \|u\|_{L^{\infty}_T H^s_{x,y}} + \| \langle \nabla_{x,y} \rangle ^{s+1} u \|_{L^{\infty}_x L^2_{y,T}} + \| \langle \nabla_{x,y} \rangle ^{s - s_0^+} u \|_{L^2_x L^{\infty}_{y,T}},
\]
and, after that, the space in which we wish to construct a solution as $X^s_T = \{ u \in C([0,T];H^s(\R^d)), \|u\|_{X^s_T} < \infty\},$ where 
\[
\| u \|_{X^s_T} = \| \|\Delta_k u \|_{Y^s_T} \|_{\ell^2(\N)}.
\]
Here $\mathcal{F}_{x,y}(\Delta_k u)(\xi,\eta,t) = \phi(2^{-k} (\xi,\eta)) \mathcal{F}_{x,y}{u}(\xi,\eta,t)$ denotes the $k-$th Littlewood--Paley projection in frequency. Because $U(t)$ is unitary, 
the presence of Kato smoothing and the fact that we supposed that \eqref{eq faminskii} holds for $s>s_0,$ then 
\begin{equation}\label{eq linear}
\| U(t)u_0 \|_{X^s_T} \lesssim \|u_0\|_s, \; \forall s \in \R. 
\end{equation}
Following the Duhamel formulation of \eqref{eq mZK}, we wish to prove that the map 
\begin{equation}\label{eq duhamel}
F(u) = U(t)u_0 + \int_0^t U(t-t')\partial_x(u^3) \, \mmd t' 
\end{equation}
has a fixed point in the space $E(T,a)$ of functions whose $X^s_T-$norm is at most $a.$ The linear part is controlled because of \eqref{eq linear}, so we focus on the non-linear part. By the methods from
both \cite{RV1} and \cite{RV2}, the non-linear part of \eqref{eq duhamel} can be estimated by 
\[
\left\| \int_0^t U(t-t') \partial_x (u^3) \, \mmd t' \right\|_{X^s_T} \lesssim \| 2^{sk} \| \Delta_k(u^3) \|_{L^1_xL^2_{y,T}} \|_{\ell^2(\N)}. 
\]
Our goal is to estimate the right-hand side in terms of a power of the $X^s_T$ norm of $u.$ To that extent, we notice that the definition of $Y^s_T$ gives us 
\[
2^{(s+1)k} \| \Delta_k u \|_{L^{\infty}_x L^2_{y,T}} \lesssim \| \Delta_k u \|_{Y^s_T}  
\]
and 
\[
2^{(s-s_0^+)k} \| \Delta_k u \|_{L^2_x L^{\infty}_{y,T}} \lesssim \| \Delta_k u \|_{Y^s_T}. 
\]
Interpolation yields then 
\begin{equation}\label{eq interpolation}
2^{\alpha k} \| \Delta_k u \|_{L^p_x L^q_{y,T}} \lesssim \| \Delta_k u \|_{Y^s_T},
\end{equation}
whenever $\frac{1}{p} = \frac{1-\theta}{2}, \, \frac{1}{q} = \frac{\theta}{2},$ and $\alpha = (s +(1+s_0)\theta - s_0)^-.$ In particular, taking $\theta = \frac{s_0}{1+s_0}^+,$ we get 
\[
2^{sk} \| \Delta_k u \|_{L^{p_0^+}_x L^{q_0^-}_{y,T}} \lesssim \|\Delta_k u\|_{Y^s_T},
\]
with $q_0 = \frac{2(1+s_0)}{s_0}, \, p_0 = 2(1+s_0).$ 
Employing the paraproduct decomposition 
\[
\Delta_k(u^3) = \Delta_k \left[ (P_0 u)^3 + \sum_{j \ge k} (\Delta_{j+1} u) \left((P_{j+1} u)^2 + (P_j u)(P_{j+1} u) + (P_j u)^2\right) \right]
\]
together with H\"older's inequality several times shows that 
\begin{equation}\label{eq paraproduct bound}
\| \Delta_k (u^3) \|_{L^1_xL^2_{y,T}} \lesssim \sum_{l \ge k} \| \Delta_l u \|_{L^{p_0^+}_x L^{q_0^-}_{y,T}} \|P_l u\|_{L^{2p_1^-}_x L^{2q_1^+}_{y,T}}^2 + \|P_0 u\|_{L^{p_0^+}_x L^{q_0^-}_{y,T}}  \|P_0 u\|_{L^{2p_1^-}_x L^{2q_1^+}_{y,T}}^2
\end{equation}
where $p_1 = \frac{p_0}{p_0 - 1}, \, q_1 = \frac{2q_0}{q_0 - 2}.$ Here we denote by $P_j u$ the smooth (space) frequency projection of $u$ onto the ball of center zero and radius $\sim 2^j$ given by $P_j = \sum_{l \le j} (\Delta_l u)$. We then take 
$\theta = \frac{1}{q_1}^-$ on \eqref{eq interpolation}. Notice that $q_1 = \frac{4(1+s_0)/s_0}{2/s_0} = 2(1+s_0) = p_0.$ This implies that 
\[
2^{(s+\frac{1}{2} - s_0)^-k} \| \Delta_k u \|_{L^{2p_1^-}_xL^{2q_1^+}_{y,T}} \lesssim \|\Delta_k u\|_{Y^s_T}.
\]
We can therefore estimate 
\[
\| P_j u \|_{L^{2p_1^-}_x L^{2q_1^+}_{y,T}} \lesssim \sum_{k \le j}  \| \Delta_k u \|_{L^{2p_1^-}_xL^{2q_1^+}_{y,T}} \lesssim \left(\sum_{k \le j} 2^{-(s+\frac{1}{2} - s_0)^-k} \right) \|u\|_{X^s_T}.
\]
The sum above converges as long as $s > s_0 - \frac{1}{2}.$ Therefore, 
\[
\| P_j u \|_{L^{2p_1^-}_x L^{2q_1^+}_{y,T}} \lesssim \|u\|_{X^s_T}.
\]
In order to finish the iteration argument, we note the simple estimate 
\begin{equation}\label{eq simple holder}
2^{(s-1)^+j} \| \Delta_j u \|_{L^N_{x,y,T}} \lesssim T^{\delta_N} \| u \|_{X^s_T}, 
\end{equation}
which holds by Sobolev embedding and H\"older's inequality, for $N \gg 1.$ Using the bound on the right hand side of \eqref{eq paraproduct bound} and interpolating with \eqref{eq simple holder} for $N$ arbitrarily large, we get that 
\[
\| 2^{sk} \| \Delta_k (u^3) \|_{L^1_xL^2_{y,T}} \|_{\ell^2(\N)} \lesssim T^{\delta} \|u\|_{X^s_T}^2 \| (1_{j\ge 0} 2^{-sj}) * \|(\Delta_j u)\|_{Y^s_T}\|_{\ell^2(\N)}.
\]
By the discrete version of Young's convolution inequality, the last expression is bounded by $T^{\delta} \|u\|_{X^s_T}^3,$ with $\delta > 0.$ 
Notice that we can run this argument whenever $s>s_0 - 1/2.$ Therefore, by Picard's iteration method, we 
would obtain the local well-posedness of \eqref{eq mZK} in $H^s, \, s>s_0-1/2.$ Moreover, we actually show that, if $s > s_0 - 1/2,$ there is $\delta >0$ so that 
\[
\| F(u) \|_{X^s_T} \le c\|u_0\|_s + C T^{\delta} \|u\|_{X^s_T}^3.
\]
As the nonlinearity in \eqref{eq mZK} is smooth, a standard technique using the Duhamel formulation shows that the data-to-solution map given by the solution of the IVP \eqref{eq mZK} is, in fact, $C^{\infty}-$\emph{smooth} for all $s > s_0 - 1/2.$ But the recent result by Kinoshita
\cite[Theorem~1.3]{Kinoshita2} proved that the data-to-solution map induced by \eqref{eq mZK} 
is not $C^3-$smooth for $s< 1/4,$ and thus, as $s_0 < 3/4,$ this is a contradiction, which finally implies that $s_0 \ge \frac{3}{4},$ as originally wished. 

\section{Proof of Theorem \ref{thm gZK dk}}

Finally, we prove the local well-posedness result for the generalized Zakharov--Kuznetsov equation in higher dimensions. 

\subsection{Proof of Proposition \ref{thm strichartz}}

In this subsection, we prove some auxiliary Strichartz estimates, which will enable us to prove the local well-posedness results for the generalized Zakharov--Kuznetsov equation in any dimension $d > 3,$ given that 
$k \ge 3.$ 

We start by proving a dispersion estimate for the group $U(t) =: e^{-t\partial_x \Delta}.$ 
\begin{proposition}\label{prop dispersion} Let $d \ge 3$ and $p \in [2,+\infty).$ Then for each $t\in \R$ it holds that 
\[
\|e^{-t\partial_x \Delta} f\|_{L^p} \lesssim |t|^{-2\left(\frac{1}{2} - \frac{1}{p}\right)} \left\| \langle \nabla_{x,y} \rangle^{(d-3) \left(1 - \frac{2}{p}\right)}f\right\|_{L^{p'}},
\]
for each $f \in \mathcal{S}(\R^d).$
\end{proposition}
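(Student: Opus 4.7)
The plan is to first establish an endpoint $L^1 \to L^\infty$ dispersion estimate with a derivative loss of order $d-3$, and then obtain the stated bound by Riesz--Thorin interpolation against the unitarity of $U(t) = e^{-t\partial_x\Delta}$ on $L^2$.

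After a Littlewood--Paley decomposition $f = \sum_N f_N$ with $\widehat{f_N}$ supported in $|(\xi,\eta)| \sim N$, and an application of Young's convolution inequality, the endpoint reduces to the pointwise kernel bound $\|K_t^N\|_{L^\infty} \lesssim |t|^{-1} N^{d-3}$ for the oscillatory kernel
\[
K_t^N(x,y) = \int_{\R^d} e^{i(x\xi + y\cdot\eta + t\xi(\xi^2 + |\eta|^2))}\,\phi\!\left(\tfrac{|(\xi,\eta)|}{N}\right) d\xi\, d\eta.
\]
The anisotropic rescaling $(\xi,\eta) = N(\xi',\eta')$ further reduces this to showing that
\[
J(\tilde x, \tilde y, s) = \int e^{i(\tilde x \xi + \tilde y \cdot \eta + s \xi(\xi^2 + |\eta|^2))}\,\phi(|(\xi,\eta)|)\, d\xi\, d\eta
\]
satisfies $|J| \lesssim s^{-1}$ uniformly in $(\tilde x, \tilde y) \in \R^d$, with $s = tN^3$.

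The heart of the argument is this oscillatory integral bound. The plan is to integrate in $\eta$ first: for fixed $\xi \neq 0$ the phase is quadratic in $\eta$ with Hessian $2s\xi\, I_{d-1}$, and after completing the square, standard Fresnel estimates give $\min\bigl(1,(s|\xi|)^{-(d-1)/2}\bigr)$ for the inner integral. The remaining $\xi$-integral then carries the phase $P(\xi) = s\xi^3 + \tilde x \xi - |\tilde y|^2/(4s\xi)$, whose third derivative satisfies $|P'''(\xi)| \geq 6s$. Performing a further dyadic decomposition $|\xi| \sim 2^{-k}$ and combining Van der Corput against the trivial measure bound $\sim 2^{-k}$ yields in each slice a contribution of at most $(s\, 2^{-k})^{-(d-1)/2}\min(2^{-k}, s^{-1/3})$ in the regime $s\, 2^{-k} \gtrsim 1$, and $\min(2^{-k},s^{-1/3})$ otherwise. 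Summing over $k$, the critical scale $|\xi| \sim 1/s$ dominates and produces $|J| \lesssim s^{-1}$, with at worst a $\log s$ factor when $d=3$ that is absorbed into a slight $\varepsilon$-strengthening of the derivative norm.

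Summing the Littlewood--Paley pieces gives $\|U(t)f\|_{L^\infty} \lesssim |t|^{-1}\|\langle\nabla_{x,y}\rangle^{d-3} f\|_{L^1}$, and Riesz--Thorin interpolation against $\|U(t) f\|_{L^2} = \|f\|_{L^2}$ with parameter $\theta = 2/p$ produces decay of order $|t|^{-(1-\theta)} = |t|^{-2(1/2 - 1/p)}$ and derivative loss $(d-3)(1-\theta) = (d-3)(1 - 2/p)$, matching the statement exactly. The main obstacle is the oscillatory integral analysis: the ZK phase $\xi(\xi^2+|\eta|^2)$ has a Hessian block in $\eta$ that becomes singular as $\xi \to 0$, and a full Hessian of determinant $\propto \xi^{d-2}(3\xi^2-|\eta|^2)$ that loses rank along the surface $3\xi^2 = |\eta|^2$. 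The first degeneracy is handled by the dyadic decomposition in $|\xi|$, while the second is bypassed by invoking Van der Corput against the uniformly nonvanishing third derivative $P'''$ of the residual $\xi$-phase rather than its Hessian.
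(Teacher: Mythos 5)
Your overall architecture is the same as the paper's: Littlewood--Paley decomposition, rescaling of the unit-frequency estimate to produce the factor $2^{k(d-3)(1-2/p)}$, and interpolation with the trivial $L^2$ bound. The difference is that the paper does not prove the crucial unit-scale kernel bound $\sup_z|\int\psi(|\zeta|)e^{it\zeta_1|\zeta|^2+iz\cdot\zeta}\,\mmd\zeta|\lesssim|t|^{-1}$ at all: it cites it (Proposition~2.1 in \cite{Schippa}, alternatively Proposition~3.1 in \cite{HLRRW}), whereas you attempt to rederive it, and it is exactly there that your sketch falls short. First, after you take the modulus of the inner $\eta$-integral you can no longer ``carry'' the residual phase $P(\xi)=s\xi^3+\tilde x\xi-|\tilde y|^2/(4s\xi)$; to apply van der Corput in $\xi$ you need a genuine stationary-phase expansion in $\eta$ with remainder, keeping track of an amplitude $a(\xi;\tilde y,s)$ (the cutoff $\phi(|(\xi,\eta)|)$ is not a tensor product, and the critical point $\eta=-\tilde y/(2s\xi)$ may leave the support), together with bounds on the variation of $a$ on each dyadic slice. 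Second, and more seriously, your own bookkeeping shows that the slicing argument with only $|P'''|\gtrsim s$ yields $s^{-1}\log s$ when $d=3$, and the proposed fix --- absorbing the logarithm into an $\varepsilon$ of derivatives --- is not available, because at $d=3$ the proposition asserts \emph{zero} derivative loss, $(d-3)(1-2/p)=0$. So as written you prove a strictly weaker statement in three dimensions; you would either need the finer case analysis of \cite{HLRRW,Schippa} (which removes the logarithm) or simply quote their lemma, as the paper does.

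There is also a gap in the way you assemble the pieces. Summing the Littlewood--Paley blocks of the endpoint estimate gives $\|U(t)f\|_{L^\infty}\lesssim|t|^{-1}\sum_N N^{d-3}\|f_N\|_{L^1}$, i.e.\ a Besov norm $\|f\|_{B^{d-3}_{1,1}}$, and this is \emph{not} controlled by $\|\langle\nabla_{x,y}\rangle^{d-3}f\|_{L^1}$; moreover, Riesz--Thorin between this Besov endpoint and $L^2$ produces a right-hand space of type $B^{s}_{p',p'}$, which for $p'<2$ is strictly smaller than $\langle\nabla_{x,y}\rangle^{-s}L^{p'}$, so ``sum first, interpolate second'' again yields a weaker conclusion than claimed. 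The standard remedy --- and the one the paper uses --- is to interpolate the frequency-localized estimates first, obtaining $\|U(t)\Delta_k f\|_{L^p}\lesssim|t|^{-2(\frac12-\frac1p)}2^{k(d-3)(1-\frac2p)}\|\tilde\Delta_k f\|_{L^{p'}}$ for each $k$, and only then sum via the square function on the left and Minkowski's inequality (valid since $p'\le 2$) together with the Littlewood--Paley theorem on the right. With those two repairs (quoting or properly proving the $|t|^{-1}$ kernel bound, and reordering interpolation and summation), your argument becomes a correct, essentially self-contained version of the paper's proof.
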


\begin{proof} We first bound the left-hand side of the conclusion of Proposition \ref{prop dispersion} as
\begin{align}\label{eq LP1}
\|U(t)f\|_{L^p} &\lesssim \left( \sum_{k \ge 0} \|U(t) \Delta_k f\|_{L^p}^2 \right)^{1/2},
\end{align}
so that it suffices to bound each summand on the right-hand side of \eqref{eq LP1}. In order to do so, it suffices to prove that 
\begin{equation}\label{eq single scale}
\|U(t) \Delta_0 f\|_{L^p} \lesssim |t|^{-2\left(\frac{1}{2} - \frac{1}{p}\right)} \|f\|_{L^{p'}}.
\end{equation}
Indeed, suppose \eqref{eq single scale} holds, and let $f(x) = \frac{1}{2^{kd}} (\tilde{\Delta}_k g)(x/2^k)$ above. A simple computation then shows that
$$ U(t)\Delta_0 f (x) = 2^{-kd} U(t/2^{3k}) \Delta_k g (x/2^k).$$ 
Therefore, 
\begin{align*}
& 2^{kd\left(\frac{1}{p}-1\right)} \| U(t/2^{3k}) \Delta_k g \|_{L^p} = \| U(t) \Delta_0 f \|_{L^p} \cr 
\lesssim & |t|^{-2\left(\frac{1}{2} - \frac{1}{p}\right)} \|f\|_{L^{p'}} = |t|^{-2\left(\frac{1}{2}-\frac{1}{p}\right)} 2^{kd\left(\frac{1}{p'}-1\right)} \|\tilde{\Delta}_k g\|_{L^{p'}}, \cr
\end{align*}
for all $t \in \R.$ Rearranging terms, we get that 
\begin{equation*}
\begin{split}
\|U(t')\Delta_k g\|_{L^p} &\lesssim |t'|^{-2\left(\frac{1}{2}-\frac{1}{p}\right)} 2^{k(d-3)\left(1-\frac{2}{p}\right)} \|\tilde{\Delta}_k g\|_{L^{p'}} \\
&\sim |t'|^{-2\left(\frac{1}{2}-\frac{1}{p}\right)} \| \langle \nabla_{x,y} \rangle^{k(d-3)\left(1-\frac{2}{p}\right)} \tilde{\Delta}_k g \|_{L^{p'}},
\end{split}
\end{equation*}
for all $t' \in \R.$ By \eqref{eq LP1}, we have then
\begin{align*}
\|U(t)g\|_{L^p} &\lesssim |t|^{-2\left(\frac{1}{2}- \frac{1}{p}\right)} \left( \sum_{k \ge 0}\| \langle \nabla_{x,y} \rangle^{k(d-3)\left(1-\frac{2}{p}\right)} \tilde{\Delta}_k g \|_{L^{p'}}\right) \cr
		&\lesssim |t|^{-2\left(\frac{1}{2} - \frac{1}{p}\right)} \left\| \left(\sum_{k \ge 0} | \Delta_k \langle \nabla_{x,y} \rangle^{k(d-3)\left(1-\frac{2}{p}\right)} g|^2\right)^{1/2} \right\|_{L^{p'}} \cr
		&\lesssim |t|^{-2\left(\frac{1}{2} - \frac{1}{p}\right)} \|\langle \nabla_{x,y} \rangle^{k(d-3)\left(1-\frac{2}{p}\right)} g\|_{L^{p'}},
\end{align*}
by the Littlewood--Paley theorem. 

In order to prove \eqref{eq single scale}, we prove the endpoints $p=2$ and $p=\infty$ and interpolate. For the $p=2$ case, we simply use Plancherel's theorem to obtain
\[
\|U(t) \Delta_0 f \|_{L^2} \lesssim \|f\|_{L^2}.
\]
For the $p=\infty$ case, we rewrite 
\begin{align*}
U(t)\Delta_0 f(x) & = \mathcal{F}^{-1}(\psi(|\xi|)e^{it\xi_1 |\xi|^2} \widehat{f}(\xi))(x) = \int_{\R^d} \psi(|\xi|)e^{it\xi_1 |\xi|^2} \widehat{f}(\xi) e^{-i x \cdot \xi} \, \mmd \xi\cr
		  & = \int_{\R^d} f(y) \left(\int_{\R^d} \psi(|\xi|) e^{it\xi_1 |\xi|^2 + i(x-y) \cdot \xi}  \, \mmd \xi \right) \, \mmd y,
\end{align*}
by Fourier inversion. Taking the modulus inside and using the following lemma yields the proposition:

\begin{lemma}[Proposition 2.1 in \cite{Schippa}] Let $\psi: \R^d \to \R$ be a smooth radial function supported in $B_d(0,2) \backslash B_d(0,1/2).$ Then it holds that 
\[
\left|\int_{\R^d} \psi(|\xi|) e^{it\xi_1 |\xi|^2 + i z \cdot \xi}  \, \mmd \xi \right| \le C |t|^{-1},
\]
where $C>0$ does not depend on $z \in \R^d.$ 
\end{lemma}
See \cite{Schippa} (and alternatively \cite[Proposition~3.1]{HLRRW}) for a proof. 

As previously remarked, we are now done. Indeed, it then holds that
$$
|U(t) \Delta_0 f(x)| \lesssim |t|^{-1} \|f\|_{L^1},
$$ 
which, by the previous considerations, finishes our proof.
\end{proof}

\begin{proof}[Proof of Proposition \ref{thm strichartz}] Now that we have Proposition \ref{prop dispersion}, we can perform the usual $TT^*$ method. First, consider the $\frac{2}{q} + \frac{2}{r} = 1$ line.
By the usual duality arguments, the conclusion follows if we prove that
\[
\left\| \int (-\Delta)^s U(t-t') F(\cdot,t') \, \mmd t' \right\|_{L^r_{t \in [0,1]} L^q(\R^d)} \lesssim \|F\|_{L^{r'}_{t \in [0,1]} L^{q'}(\R^d)},
\]
for all $F \in \mathcal{S}(\R^{d+1}).$ As we have that $\frac{1}{r} = \frac{1}{2} - \frac{1}{q},$ the value of $s = (d-3) \left(\frac{1}{2} - \frac{1}{q}\right)$ allows us to use the Hardy--Sobolev inequality: 
\begin{align*}
\left\| \int (-\Delta)^s U(t-t') F(\cdot,t') \, \mmd t' \right\|_{L^r_{t \in [0,1]} L^q(\R^d)} &\lesssim \left\| \int \|  (-\Delta)^s U(t-t') F(\cdot,t') \|_{L^q(\R^d)} \, \mmd t' \right\|_{L^r} \cr 
											  &\lesssim \left\| \int \frac{\|F(\cdot,t')\|_{L^{q'}(\R^d)}}{|t-t'|^{1-\frac{2}{q}}} \, \mmd t' \right\|_{L^r} \cr
											  &\lesssim \|F\|_{L^{r'}_{[0,1]} L^{q'}(\R^d)}.
\end{align*}
For the $\frac{2}{q} + \frac{2}{r} < 1$ case, let $\tilde{q}$ be such that $\frac{2}{\tilde{q}} + \frac{2}{r} = 1,$ employ the strategy above for such $\tilde{q}$ and then apply the sharp version of the 
Sobolev embedding theorem. This finishes the proof. 
\end{proof}

\subsection{Linear Estimates} We will need, besides the Strichartz estimates from the previous subsection, some \emph{linear estimates} to free solutions to \eqref{eq linear ZK}. We remark that most of them 
are just direct adaptations of the low-dimensional settings, and thus the brevity in their proofs. See the mentioned references throughout the text for more details. 

\begin{proposition}[Kato smoothing]\label{prop kato smoothing} Let $u_0 \in \mathcal{S}(\R^d)$ and $U(t)$ be as before. Then it holds that 
\[
\| \nabla U(t) u_0\|_{L^{\infty}_x L^2_{y,t}} \lesssim \|u_0\|_{L^2}.
\]
\end{proposition}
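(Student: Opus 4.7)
The plan is to run the classical Kato smoothing argument adapted to the phase $\xi(\xi^2+|\eta|^2)$ of the Zakharov--Kuznetsov group. Writing, for $u_0 \in \mathcal{S}(\R^d)$,
\[
U(t) u_0(x,y) = \int_{\R^d} e^{i(x\xi + y\cdot\eta + t\xi(\xi^2+|\eta|^2))} \widehat{u_0}(\xi,\eta) \, \mmd \xi \, \mmd \eta,
\]
I would view $x \in \R$ as a fixed parameter and try to compute the $L^2_{y,t}$ norm by Plancherel's theorem in the variables $(y,t)$. The key observation is that for each fixed $\eta \in \R^{d-1}$, the map $\xi \mapsto \tau := \xi(\xi^2+|\eta|^2)$ is a $C^\infty$ bijection of $\R$ onto itself, because its derivative
\[
\frac{\partial \tau}{\partial \xi} = 3\xi^2 + |\eta|^2
\]
is nonnegative and vanishes only at the single point $\xi=0$ when $\eta=0$. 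Hence I can globally invert this map and write $\xi = \xi(\tau,\eta)$.

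Performing this change of variables inside the inverse Fourier transform in $\xi$ for each fixed $\eta$ puts $U(t)u_0$ in the form of an honest $(\eta,\tau)$-inverse Fourier transform with a Jacobian weight $(3\xi^2+|\eta|^2)^{-1}$. Applying Plancherel in $(y,t)$ and then changing variables back from $\tau$ to $\xi$, I obtain the identity
\[
\|U(t)u_0(x,\cdot,\cdot)\|_{L^2_{y,t}}^2 = c_d \int_{\R^d} \frac{|\widehat{u_0}(\xi,\eta)|^2}{3\xi^2+|\eta|^2} \, \mmd \xi \, \mmd \eta,
\]
with the crucial feature that the right-hand side is independent of $x$. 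This immediately upgrades the pointwise-in-$x$ bound to an $L^\infty_x L^2_{y,t}$ bound.

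Finally, since $\nabla U(t)u_0$ corresponds on the Fourier side to multiplication by $(\xi,\eta)$, applying the above identity to $\nabla U(t)u_0$ yields
\[
\|\nabla U(t)u_0\|_{L^\infty_x L^2_{y,t}}^2 \lesssim \int_{\R^d} \frac{\xi^2+|\eta|^2}{3\xi^2+|\eta|^2}\,|\widehat{u_0}(\xi,\eta)|^2 \, \mmd \xi \, \mmd \eta \lesssim \|u_0\|_{L^2}^2,
\]
because the weight is bounded by $1$. There is no real obstacle here; the only delicate point, which is purely cosmetic, is the justification of the change of variables through the critical point $(\xi,\eta)=(0,0)$, which one can handle either by approximation of $u_0$ by Schwartz functions vanishing near the origin in frequency or by noting that this is a measure-zero set and the formal computation is rigorous as an identity of $L^2$-valued Fourier multipliers.
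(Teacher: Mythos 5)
Your argument is correct and is essentially the paper's own proof: the same change of variables $\tau=\xi(\xi^2+|\eta|^2)$ at fixed $\eta$, Plancherel in $(y,t)$ giving an $x$-independent expression with Jacobian weight $(3\xi^2+|\eta|^2)^{-1}$, and the observation that $3\xi^2+|\eta|^2\sim\xi^2+|\eta|^2$ absorbs the gradient. Your closing remark on handling the degenerate point $(\xi,\eta)=(0,0)$ is a fine (and slightly more careful) way to justify the change of variables, but it changes nothing of substance.
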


\begin{proof} This proof is basically a remake of the result for the KdV \cite{KPV1} and of the two- and three-dimensional cases \cite{Faminskii, RV2}. 

In fact, we perform the change of variables $\vartheta = \xi(\xi^2 + |\eta|^2) = h_{\eta}(\xi)$ in the Fourier definition of $U(t)u_0.$ This yields 
\[
U(t)u_0 = \mathcal{F}^{-1}_{\vartheta,\eta} (e^{i x (h_{\eta})^{-1}(\vartheta)} (h_{\eta}^{-1})'(\vartheta) \widehat{u_0}(h_{\eta}(\vartheta),\eta)) (y,t).
\]
Using now Plancherel and inverting the change of variables, we obtain 
\[
\|U(t)u_0\|_{L^2_{y,t}} = \|(h_{\eta}^{-1})'(\vartheta)  \widehat{u_0}(h_{\eta}(\vartheta),\eta)\|_{L^2_{\vartheta,\eta}} = \| |h_{\eta}'(\xi)|^{-1/2} \widehat{u_0}(\xi,\eta,\tau)\|_{L^2_{\xi,\eta}}.
\]
By noticing that $h_{\eta}'(\xi) = 3\xi^2 + |\eta|^2 \sim \xi^2 + \eta^2 \sim \mathcal{F} (-\Delta),$ by applying the expression above to 
$u_0 = \nabla v_0$ we get to 
\[
\| \nabla U(t) v_0\|_{L^2_{y,t}} \lesssim \| v_0\|_{L^2_{x,y}},
\]
and thus taking the supremum of the left-hand side in $x$ gives us the result. 
\end{proof}

\begin{proposition}[Maximal estimate]\label{prop maximal estimate} Let $d \ge 3.$ It holds that 
\[
\| U(t) u_0\|_{L^4_x L^{\infty}_{y,t}} \lesssim \|u_0\|_{H^s},
\]
where $s > \frac{d}{2} - \frac{1}{4}.$ 
\end{proposition}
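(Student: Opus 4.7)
The plan is to deduce Proposition \ref{prop maximal estimate} by chaining Proposition \ref{thm strichartz} with two successive Sobolev embeddings, one in $t$ and one in the transverse variable $y \in \R^{d-1}$. First I would specialize Proposition \ref{thm strichartz} to the diagonal endpoint $q=r=4$, which lies on the critical line $\tfrac{2}{q}+\tfrac{2}{r}=1$, giving the starting bound
\[
\|U(t)u_0\|_{L^4_{x,y,t\in[0,1]}} \lesssim \|u_0\|_{H^{(d-3)/4}}.
\]

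Next I would promote $L^4_t$ to $L^\infty_t$ by a Sobolev embedding in time. On the bounded interval $[0,1]$ the one-dimensional embedding $W^{\alpha,4}_t \hookrightarrow L^\infty_t$ holds for any $\alpha > 1/4$, so applying it pointwise in $(x,y)$ and taking $L^4_{x,y}$ norms gives
\[
\|U(t)u_0\|_{L^4_{x,y}L^\infty_{t\in[0,1]}} \lesssim \|U(t)u_0\|_{L^4_{x,y,t}} + \bigl\| |\partial_t|^\alpha U(t)u_0\bigr\|_{L^4_{x,y,t}}.
\]
Since $\partial_t U(t)u_0 = -\partial_x\Delta\, U(t)u_0$ and the symbol satisfies the crude bound $|\xi(\xi^2+|\eta|^2)|^\alpha \lesssim \langle(\xi,\eta)\rangle^{3\alpha}$, the fractional time derivative $|\partial_t|^\alpha$ transfers into at most $3\alpha$ spatial derivatives. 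Combining with the Strichartz estimate above yields the intermediate maximal bound
\[
\|U(t)u_0\|_{L^4_{x,y}L^\infty_{t\in[0,1]}} \lesssim \|u_0\|_{H^{s_0}}, \qquad s_0 > \tfrac{d-3}{4}+\tfrac{3}{4} = \tfrac{d}{4},
\]
which is precisely the estimate already alluded to in the introduction.

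To finish, I would apply Sobolev embedding in $y$ to pass from $L^4_{x,y}L^\infty_t$ to $L^4_xL^\infty_{y,t}$. For each fixed $(x,t)$, the $(d-1)$-dimensional $W^{\beta,4}\hookrightarrow L^\infty$ embedding gives
\[
\sup_y|U(t)u_0(x,y,t)| \lesssim \|\langle\nabla_y\rangle^{\beta}U(t)u_0(x,\cdot,t)\|_{L^4_y}
\]
for any $\beta > (d-1)/4$. Taking $\sup_t$ on both sides and using the Minkowski-type inequality $\sup_t\|G(\cdot,t)\|_{L^4_y}\le\|\sup_t|G(\cdot,t)|\|_{L^4_y}$ (valid because $4 \le \infty$), followed by integration in $x$ and the intermediate estimate applied to $\langle\nabla_y\rangle^\beta u_0$ (which commutes with $U(t)$), yields
\[
\|U(t)u_0\|_{L^4_xL^\infty_{y,t}} \lesssim \|\langle\nabla_y\rangle^\beta U(t)u_0\|_{L^4_{x,y}L^\infty_t} \lesssim \|u_0\|_{H^{s_0+\beta}}.
\]
Choosing $s_0$ and $\beta$ just above $d/4$ and $(d-1)/4$ respectively produces the required threshold $s > \tfrac{d}{4}+\tfrac{d-1}{4}=\tfrac{d}{2}-\tfrac{1}{4}$. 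The argument is essentially bookkeeping once Proposition \ref{thm strichartz} is in hand; the only points worth double-checking are the symbol transfer $|\partial_t|^\alpha\rightsquigarrow\langle\nabla\rangle^{3\alpha}$ (immediate from the degree-$3$ homogeneity of $\xi(\xi^2+|\eta|^2)$) and the Minkowski-type interchange in the final step, neither of which poses a substantive obstacle.
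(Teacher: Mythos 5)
Your argument is correct and is essentially the paper's own proof: specialize Proposition \ref{thm strichartz} to $q=r=4$, use Sobolev embedding in $t$ together with the identity $\partial_t U(t) = \partial_x\Delta\, U(t)$ (costing $3\cdot\frac14^+$ spatial derivatives), and then Sobolev embedding in $y$ (costing $\frac{d-1}{4}^+$), giving $s > \frac{d}{2}-\frac14$. The only cosmetic difference is that you convert $|\partial_t|^\alpha$ into $\langle\nabla\rangle^{3\alpha}$ immediately, whereas the paper keeps $(\partial_x\Delta)^{1/4^+}$ explicit and bounds the symbol at the end.
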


\begin{proof} As previously mentioned, Proposition \ref{thm strichartz} implies directly that 
\[
\|U(t)u_0||_{L^4_{x,y,t}} \lesssim \| \langle \nabla \rangle^{\frac{d-3}{4}} u_0\|_{L^2}.
\]
Now, if we use Sobolev embedding on the $t-$variable and the fact that $(\partial_t)^r U(t) = (\partial_x \Delta)^r U(t)$ (which follows from the fact that the 
time-space Fourier support of $U(t)u_0$ is the surface $\{\tau = \xi(\xi^2 + |\eta|^2)\}$), followed by Sobolev embedding in the $y-$variable, we obtain 
\[
\| U(t)u_0\|_{L^4_x L^{\infty}_{y,t}} \lesssim \| (\partial_x \Delta)^{1/4^+}  \langle \nabla \rangle^{\frac{d-2}{2}^+} u_0\|_{L^2}.
\]
A simple computation on the Fourier side shows that the last expression is bounded by $\| \langle \nabla \rangle^s u_0\|_{L^2}$ whenever $s > \frac{d}{2} - \frac{1}{4},$ 
as desired. 
\end{proof}

With these estimates in hands, we prove some \emph{retarded estimates} which will be key to perform the fix-point argument. 

\begin{proposition}\label{prop ret group} Let $f \in \mathcal{S}(\R^{d+1}).$ Then it holds that 
\[
\left\| \nabla \int_0^t U(t-t')f(\cdot,t') \, \mmd t'\right\|_{L^{\infty}_T L^2_{x,y}} \lesssim \|f\|_{L^1_x L^2_{y,T}}.
\]
\end{proposition}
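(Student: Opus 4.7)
The statement is a retarded (nonhomogeneous) version of the Kato smoothing estimate of Proposition \ref{prop kato smoothing}, and the natural approach is a standard $TT^*$/duality argument transferring the homogeneous smoothing to the Duhamel operator.

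The plan is as follows. Fix $T > 0$ and, for each fixed $t \in [0,T]$, set
\[
F(t) := \nabla \int_0^t U(t-t') f(\cdot, t') \, \mmd t'.
\]
I would argue by duality in $L^2_{x,y}$: for any $g \in \mathcal{S}(\R^d)$,
\[
\langle F(t), g \rangle_{x,y} = \int_0^t \langle \nabla U(t-t') f(\cdot,t'), g \rangle_{x,y} \, \mmd t' = -\int_0^t \langle f(\cdot,t'), \nabla U(t'-t) g \rangle_{x,y} \, \mmd t',
\]
using that $U(s)^* = U(-s)$ (unitarity) and $\nabla^* = -\nabla$. This is the standard move: the adjoint converts a mapping into $L^\infty_x L^2_{y,t}$ into one out of $L^1_x L^2_{y,t}$.

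Next I would apply Hölder in the $(y,t')$ variables pointwise in $x$, followed by Hölder in $x$:
\[
|\langle F(t), g \rangle_{x,y}| \le \int_{\R^{d-1}} \|f(x,\cdot,\cdot)\|_{L^2_{y,t'\in[0,t]}} \, \|\nabla U(\cdot - t)g(x,\cdot)\|_{L^2_{y,t'\in[0,t]}} \, \mmd x \le \|f\|_{L^1_x L^2_{y,T}} \, \|\nabla U(\cdot)g\|_{L^\infty_x L^2_{y,s\in\R}},
\]
after translating the time variable $s = t' - t$ and bounding the interval $[0,T]$ by all of $\R$. At this point Proposition \ref{prop kato smoothing} applied to $g$ gives
\[
\|\nabla U(s)g\|_{L^\infty_x L^2_{y,s\in\R}} \lesssim \|g\|_{L^2_{x,y}}.
\]
Taking the supremum over $\|g\|_{L^2_{x,y}} \le 1$ controls $\|F(t)\|_{L^2_{x,y}}$ by $\|f\|_{L^1_x L^2_{y,T}}$, and then taking the supremum over $t \in [0,T]$ (the right-hand side is independent of $t$) yields the claim.

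I do not anticipate a serious obstacle here — the only mild subtlety is making sure the time interval of the Kato smoothing norm dominates the time variable produced by the translation $t' \mapsto t'-t$, which is handled trivially by enlarging from $[-t,0]$ to all of $\R$. Everything else is a mechanical duality computation that matches the one-dimensional KdV template of Kenig--Ponce--Vega and its two- and three-dimensional analogs in \cite{Faminskii, RV2}.
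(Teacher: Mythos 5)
Your argument is correct and is essentially the paper's own proof: you are simply writing out explicitly the composition of the dual version of Proposition \ref{prop kato smoothing} with the unitarity of $U(t)$, handling the truncation to $\int_0^t$ by restricting/enlarging the time interval, which is the same device as the paper's multiplication by $1_{[0,t]}(t')$ followed by taking the supremum in $t$. No Christ--Kiselev argument is needed here, exactly as you observe implicitly, since the target norm is $L^{\infty}_T L^2_{x,y}$ and the duality is performed at each fixed $t$.
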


\begin{proof} We compose the dual version of Proposition \ref{prop kato smoothing} with the fact that $U(t)$ is an unitary group; this implies that
\[
\left\| \nabla \int_0^T U(t-t') f(\cdot, t')\, \mmd t' \right\|_{L^2_{x,y}} \lesssim \|f\|_{L^2_x L^2_{y,T}}.
\]
The proposition then follows if applied to $\tilde{f}(z,t') = 1_{[0,t]}(t') f(z,t')$ and taking the supremum on $t\in[0,T]$ of the left-hand side.
\end{proof}

\begin{proposition}\label{prop ret smooth} Let $f \in \mathcal{S}(\R^{d+1}).$ Then it holds that 
 \[
 \left\| \nabla^2 \int_0^t U(t-t') f(\cdot,t') \, \mmd t'\right\|_{L^{\infty}_x L^2_{y,T}} \lesssim \|f\|_{L^1_x L^2_{y,T}}.
 \]
\end{proposition}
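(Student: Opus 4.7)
My plan is to combine Kato smoothing (Proposition \ref{prop kato smoothing}) with its dual via a $TT^*$ argument, in direct analogy with the proof of Proposition \ref{prop ret group}, and then use the Christ--Kiselev lemma to upgrade the resulting untruncated bound to the Volterra integral $\int_0^t$. The extra difficulty relative to Proposition \ref{prop ret group} is that the output norm now carries $L^2$ in time rather than $L^\infty$, so the shortcut of substituting $\mathbf{1}_{[0,t]}(t')f$ into the untruncated estimate and taking a pointwise supremum in $t$ (as was done in the previous proposition) is unavailable here.

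Concretely, I would first establish the untruncated analogue
\[
\left\| \int_{\R} \nabla^2 U(t-t') f(\cdot,t') \, \mmd t' \right\|_{L^{\infty}_x L^2_{y,T}} \lesssim \|f\|_{L^1_x L^2_{y,T}}
\]
via $TT^*$. By Proposition \ref{prop kato smoothing}, the operator $T u_0 := \nabla U(t) u_0$ maps $L^2(\R^d)$ into $L^\infty_x L^2_{y, T}$. Because $U(t)$ is unitary, its formal adjoint $T^* g = \int \nabla U(-t') g(\cdot, t') \, \mmd t'$ (obtained by integration by parts in the spatial variable together with $U(t)^* = U(-t)$) maps $L^1_x L^2_{y, T}$ into $L^2(\R^d)$ with the same norm. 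Composing, $TT^* g = \int \nabla^2 U(t-t') g(\cdot, t') \, \mmd t'$ maps $L^1_x L^2_{y, T}$ into $L^\infty_x L^2_{y, T}$, which is precisely the display above.

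Next, I would appeal to the Christ--Kiselev lemma applied to the time kernel $(t, t') \mapsto \mathbf{1}_{[0, T]^2}(t, t') \nabla^2 U(t - t')$, viewed as an operator-valued kernel between the spatial Banach spaces $L^1_x L^2_y$ and $L^\infty_x L^2_y$, in order to promote the untruncated bound to its Volterra form. The main obstacle is precisely this step: the usual version of the Christ--Kiselev lemma requires a strict inequality between the time $L^p_t$-exponents on input and output, whereas both $L^1_x L^2_{y, T}$ and $L^\infty_x L^2_{y, T}$ carry $L^2$ in time, so one is at the borderline case. I would resolve this by first interpolating the untruncated $TT^*$ bound with one of the Strichartz estimates from Proposition \ref{thm strichartz} so as to open a small but strict gap in the time exponents, applying Christ--Kiselev in the interpolated range, and then re-interpolating back to the $L^2$ endpoint to recover the announced bound.
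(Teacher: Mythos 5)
Your untruncated $TT^*$ estimate is fine, and it does appear (in disguise) in the paper's argument. The gap is in the second step: the Christ--Kiselev upgrade cannot be made to work here, and your proposed interpolation workaround does not repair it. The anisotropic Christ--Kiselev lemma needs a \emph{strict} gap between the time exponents of the input and output spaces, and here both are $L^2$ in time; this is not a technicality but a genuine failure of the lemma at the endpoint $p=q=2$. Your fix --- interpolate the untruncated bound with a Strichartz estimate to open a gap, apply Christ--Kiselev there, and then ``re-interpolate back'' --- does not close the gap, for two reasons. First, the Christ--Kiselev constant blows up as the exponent gap shrinks (it behaves like the reciprocal of the gap), so the retarded estimates you obtain in the perturbed range carry constants that diverge as you approach the endpoint, and no limiting argument recovers the endpoint bound. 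Second, interpolation can only reach the endpoint if you have a second retarded estimate on the \emph{other} side of it, but Christ--Kiselev never produces an output time exponent $\le$ the input one, so there is nothing to interpolate against. The retarded smoothing estimate at this exact exponent configuration genuinely requires extra structure of the kernel, not just the untruncated bound.

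The paper's proof supplies that structure via the classical Kenig--Ponce--Vega device, following Ribaud--Vento: write $\mathbf{1}_{\{0<t'<t\}}=\tfrac12\bigl(\operatorname{sign}(t-t')+\operatorname{sign}(t')\bigr)$, so that the Duhamel term splits into two pieces. The $\operatorname{sign}(t')$ piece is exactly your $TT^*$ composition of Kato smoothing with its dual, so that half of your proposal is on target. The $\operatorname{sign}(t-t')$ piece, however, is handled by a Fourier-side computation: after Plancherel in $(y,t)$, the operator becomes convolution (in the $x$-variable) with the principal-value kernel
\[
K(\tau,x,|\eta|)=\int_{\R} e^{ix\xi}\,\frac{\xi^2+|\eta|^2}{\tau-\xi(\xi^2+|\eta|^2)}\,\mmd\xi,
\]
and the key input, imported from \cite[Proposition~3.6]{RV2}, is that $K$ is uniformly bounded in $(\tau,x,\eta)$; Young's inequality in $x$ and Plancherel then give the $L^1_xL^2_{y,T}\to L^\infty_xL^2_{y,T}$ bound for this piece. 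This uniform kernel bound is the ingredient your proposal is missing, and without it (or some substitute exploiting the specific dispersion relation) the sharp time truncation cannot be recovered from the untruncated estimate alone.
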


\begin{proof} The proof of this proposition follows, essentially, the same lines of \cite[Proposition~3.6]{RV2}. 

In fact, we start out by writing $\nabla^2 \displaystyle\int_0^t U(t-t') f(\cdot,t') \, \mmd t'$ as the sum 

\[ 
\frac{1}{2} \nabla^2 \int_{\R} U(t-t')f(t')\, \text{sign}(t-t') \, \mmd t' + \frac{1}{2} \nabla^2 \int_{\R} U(t-t')f(t')\, \text{sign}(t') \, \mmd t'.
\]
Denote the first term by $\nabla^2 F(t).$ Taking a space-time Fourier transform shows that 
$\mathcal{F}_{t,x,y} F(\xi,\eta,\tau) = \widehat{\text{sign}}(\tau- \xi(\xi^2 + |\eta|^2)) \mathcal{F}_{t,x,y}(f)(\tau,\xi,\eta). $
By Plancherel, we have 
\[
\| \nabla^2 F \|_{L^2_{y,t}} = \| K(\tau,x,|\eta|) * \mathcal{F}_{y,t}(f(x,\cdot))(\eta,\tau)\|_{L^2_{\eta,\tau}},
\]
where $K(\tau,x,|\eta|) = \displaystyle\int_{\R} e^{i x \xi} \frac{\xi^2 + |\eta|^2}{\tau - \xi(\xi^2 + |\eta|^2)} \, \mmd \xi.$ Notice that one of the consequences of 
the proof of Proposition 3.6 in \cite{RV2} is that $\|K\|_{L^{\infty}(\R^{d+1})}$ is \emph{uniformly bounded} on $\tau,x$ and $\eta.$ Therefore, an application
of Young's convolution inequality together with another of Plancherel implies that 
\[
\| \nabla^2 F \|_{L^2_{y,t}} \lesssim \|f\|_{L^1_x L^2_{y,t}}.
\]
This proves the asserted bound for the first term. For the second one, we simply use a combination of Proposition \ref{prop kato smoothing} with its dual version 
to $\tilde{f}(z,t) = f(z,t)\, \text{sign}(t).$ An easy computation then shows that this finishes the proof.
\end{proof}

\begin{proposition}\label{prop ret max} Let $f \in \mathcal{S}(\R^{d+1}).$ Then it holds that 
\[
\left\| \int_0^t U(t-t') \Delta_k f(t') \, \mmd t'\right\|_{L^4_x L^{\infty}_{y,T}} \lesssim 2^{(s_d - 1)^+k}\|\Delta_k f\|_{L^1_x L^2_{y,T}},
\]
for all $k \ge 0,$ where we let $s_d = \frac{d}{2} - \frac{1}{4}.$ 
\end{proposition}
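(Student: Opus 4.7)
The plan is a $TT^*$-style argument that composes the maximal estimate (Proposition~\ref{prop maximal estimate}) with the dual of Kato smoothing (Proposition~\ref{prop kato smoothing}), and then uses the Christ--Kiselev lemma to upgrade a non-retarded bound to the retarded one. This is in the spirit of \cite{KPV1, RV2} for the low-dimensional cases.

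First, I would consider the non-retarded operator
\[
S_k f(x,y,t) := U(t)\int_0^T U(-t')\Delta_k f(\cdot,t')\,\mmd t',
\]
obtained by replacing the upper endpoint $t$ of the Duhamel integral by $T$ and factoring out $U(t)$. Since the inner integral is frequency-localized at $|(\xi,\eta)|\sim 2^k$, Proposition~\ref{prop maximal estimate} applied to it yields
\[
\|S_k f\|_{L^4_x L^\infty_{y,T}} \lesssim 2^{s_d^+ k}\,\Big\|\int_0^T U(-t')\Delta_k f(\cdot,t')\,\mmd t'\Big\|_{L^2_{x,y}}.
\]
Next, since $U(t)$ is unitary with adjoint $U(-t)$, dualizing Proposition~\ref{prop kato smoothing} gives
\[
\Big\|\int U(-t')\nabla g(\cdot,t')\,\mmd t'\Big\|_{L^2_{x,y}} \lesssim \|g\|_{L^1_x L^2_{y,T}}.
\]
Choosing $g = |\nabla|^{-1}\Delta_k f$, which is well defined on the annulus $|(\xi,\eta)|\sim 2^k$ and costs a factor $2^{-k}$, we obtain
\[
\Big\|\int_0^T U(-t')\Delta_k f(\cdot,t')\,\mmd t'\Big\|_{L^2_{x,y}} \lesssim 2^{-k}\,\|\Delta_k f\|_{L^1_x L^2_{y,T}}.
\]
Composing these two estimates proves the desired bound, with constant $2^{(s_d-1)^+ k}$, for the non-retarded operator $S_k$.

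Finally, to pass from $S_k$ to the truly retarded operator $\int_0^t U(t-t')\Delta_k f(\cdot,t')\,\mmd t'$, I would invoke the Christ--Kiselev lemma applied in the vector-valued setting: the output time exponent is $\infty$, which strictly exceeds the input time exponent $2$ coming from the $L^2_{y,T}$ piece of $L^1_x L^2_{y,T}$, so the lemma applies and yields the retarded bound at the cost of a harmless constant. The main obstacle is the careful bookkeeping for this last step: one has to place the spatial norms $L^4_x$ and $L^1_x$ outside of the temporal norms when applying Christ--Kiselev, and verify that the frequency localization $\Delta_k$ commutes through each manipulation so that no spurious factor of $2^k$ appears. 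Beyond this, the scheme is standard, and the bound $2^{(s_d-1)^+ k}$ is precisely what one expects from the heuristic ``maximal gain $2^{s_d^+ k}$ minus smoothing gain $2^k$''.
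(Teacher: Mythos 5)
Your argument is essentially the paper's own proof: the non-retarded bound is obtained by composing Proposition~\ref{prop maximal estimate} with the dual of Proposition~\ref{prop kato smoothing} (the frequency localization at $|(\xi,\eta)|\sim 2^k$ converting the derivative gain into the factor $2^{-k}$, hence $2^{(s_d-1)^+k}$), and the retarded estimate then follows from the anisotropic Christ--Kiselev lemma of \cite{BLC}. The only cosmetic difference is in the exponent check: the paper verifies the precise condition $4=\min(4,\infty)>2=\max\left(2,1,\tfrac{2\cdot 1}{2}\right)$ from Theorem B.3(i) of \cite{BLC}, rather than only comparing the temporal exponents $\infty$ and $2$, but the conclusion is the same.
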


\begin{proof} Notice that, from Proposition \ref{prop maximal estimate} and the dual version of \ref{prop kato smoothing}, we get that
\[
\left\| \int_0^T U(t-t') \Delta_k f(t') \, \mmd t' \right\|_{L^4_x L^{\infty}_{y,T}} \lesssim 2^{(s_d-1)^+ k} \|\Delta_k f\|_{L^1_x L^2_{y,T}}.
\]
We now use an anitropic version of the Christ--Kiselev lemma \cite{CK}, which can be found in Theorem B.3, part (i), from \cite{BLC}. Indeed, our exponents meet well the conditions,
since $4 = \min(4,\infty) > 2 = \max\left(2, 1, \frac{2 \cdot 1}{2}\right).$ This implies the desired retarded estimate, as wished.
\end{proof}

\subsection{Proof of the well-posedness result} Taking the previously stated linear estimates as a starting point, we create a set of norms designed to take into consideration 
the smoothing and maximal estimates of \eqref{eq linear ZK}. 

In fact, like in \S \ref{sec 2d counter}, we define a set of auxiliary norms related to the 
estimates we have. Let then 
\[
\| u \|_{\mathcal{Y}^s_T(d)} = \|u\|_{L^{\infty}_T H^s_{x,y}} + \|\langle \nabla \rangle^{s-s_d^+} u\|_{L^4_x L^{\infty}_{y,T}} + \| \langle \nabla \rangle^{s+1} u\|_{L^{\infty}_x L^2_{y,T}}.
\]
We then define the norms 
\[
\|u\|_{\mathcal{X}^s_T(d)} = \left\| 2^{sj} \| \Delta_j u \|_{\mathcal{Y}^s_T} \right\|_{\ell^2(\N)},
\]
which defines the space in which we shall perform the iteration argument. Indeed, fix $k \ge 4$ and consider the Duhamel operator associated to \eqref{eq gZK}:
\begin{equation}\label{eq duhamel gZK} 
\Gamma_{u_0}(u)(t) = U(t)u_0 + \int_0^t U(t-t') \partial_x (u^{k+1})(t') \, \mmd t'.
\end{equation}
We seek to prove that $\Gamma_{u_0}$ preserves some metric space 
$$\mathcal{E}_{a;d}(T) = \{ v \in \mathcal{X}^s_T(d) \colon \|v\|_{\mathcal{X}^s_T(d)} \le a\},
$$ 
and is, in fact, a \emph{contraction} there. From Propositions \ref{prop kato smoothing} and \ref{prop maximal estimate}, we have that 
\[
\|U(t)u_0\|_{\mathcal{X}^s_T(d)} \lesssim \|u_0\|_s, \, \forall s \in \R.
\]
Thus, we are left with bounding the integral term. On the other hand, using Propositions \ref{prop ret group}, \ref{prop ret smooth} and \ref{prop ret max} together 
with the definitions of the $\mathcal{Y}^s_T(d)$ and $\mathcal{X}^s_T(d)$ norms, we see that 
\begin{equation}\label{eq bound nonlinear}
\left\| \int_0^t U(t-t') \partial_x (u^{k+1})(t') \, \mmd t' \right\|_{\mathcal{X}^s_T(d)} \lesssim \left\| 2^{sj} \|\Delta_j (u^{k+1})\|_{L^1_x L^2_{y,T}} \right\|_{\ell^2(\N)}.
\end{equation}
Our task is then to bound the right-hand side of \eqref{eq bound nonlinear}. Now we follow the approach by Ribaud and Vento \cite{RV1} in the two-dimensional setting. 

We first notice that interpolating the second and third terms in the definition of $\| \cdot\|_{\mathcal{Y}^s_T(d)}$ gives 
\begin{equation}\label{eq interpolation highD}
2^{\alpha j} \| \Delta_j u\|_{L^p_x L^q_{y,T}} \lesssim \|\Delta_j u\|_{\mathcal{X}^s_T(d)},
\end{equation}
where $p = \frac{4}{1-\theta}, \, q = \frac{2}{\theta}$ and $\alpha = (s + (1+s_d)\theta - s_d)^-.$ In particular, setting $\theta = \frac{s_d}{s_d + 1},$ we have 
\[
2^{sj} \|\Delta_j u\|_{L^{p_1^+}_x L^{q_1^-}_{y,T}} \lesssim \|\Delta_j u \|_{\mathcal{X}^s_T(d)},
\]
where $p_1 = 4(s_d + 1), \, q_1 = \frac{2(s_d + 1)}{s_d}.$ 
From this point, in analogy to \S \ref{sec 2d counter}, a paraproduct decomposition of $\Delta_j(u^{k+1})$ and H\"older's inequality shows that 
\[
\|\Delta_j(u^{k+1})\|_{L^1_x L^2_{y,T}} \lesssim \sum_{l \ge j-1} \| \Delta_l u\|_{L^{p_1^+}_x L^{q_1^-}_{y,T}} \| P_l u\|_{L^{(k p_1')^-}_x L^{ (k \tilde{q_1})^+}_{y,T}}^k.
\]
Here we use the notation $p_1'$ and $\tilde{q_1}$ for the positive reals so that $\frac{1}{p_1} + \frac{1}{p_1'} = 1$ and $\frac{1}{q_1} + \frac{1}{\tilde{q_1}} = \frac{1}{2}.$ 
The first term in the product of the right-hand side of the equation above admits adequate bounds by the previous considerations, so it remains to estimate the second one. 

Indeed, let $q = \frac{2}{\theta} = k\tilde{q_1} = 2k(s_d + 1) \iff \theta = \frac{1}{k(s_d+1)}$ in \eqref{eq interpolation highD}. This yields 
\begin{equation}\label{eq interpolation highD2}
2^{(s + 1/k - s_d)^- j} \| \Delta_j u\|_{L^{\left(\frac{1}{4} - \frac{1}{4k(s_d + 1)}\right)^{-1}}_x L^{2k(s_d+1)}_{y,T}} \lesssim \|\Delta_j u \|_{\mathcal{X}^s_T(d)},
\end{equation}
for all $j \ge 0.$ Notice now that 
\[
\| P_l u\|_{L^{(k p_1')^-}_x L^{ (k \tilde{q_1})^+}_{y,T}} \lesssim \sum_{r = 0}^l \|\Delta_r u\|_{L^{(k p_1')^-}_x L^{ (k \tilde{q_1})^+}_{y,T}} 
\]
\begin{equation}\label{eq many indices}
\lesssim \sum_{r = 0}^l 2^{(1/4 - 1/k)^+ r}\| \Delta_r u\|_{L^{\left(\left(\frac{1}{4} - \frac{1}{4k(s_d + 1)}\right)^{-1}\right)^+}_x L^{2k(s_d+1)^+}_{y,T}},
\end{equation}
by Sobolev embedding on the $x-$variable. Notice that we need $k \ge 4$ in order for this step to work. 

The right-hand side of \eqref{eq many indices} above is bounded by $T^{\delta} \| u\|_{\mathcal{X}^s_T(d)}$ as long as 
$s + \frac{1}{k} - s_d > \frac{1}{4}  - \frac{1}{k} \iff s > \frac{d}{2} - \frac{2}{k}.$ Indeed, this follows from interpolating \eqref{eq interpolation highD2} 
with the trivial bound 
\[
2^{-j^+} \| \Delta_j u\|_{L^N_{x,y,T}} \lesssim T^{\delta} \|\Delta_j u\|_{L^{\infty}_T L^2_{x,y}},
\]
where we take $N \gg 1$ sufficiently large. Collecting these bounds, it holds that 
\[
\left\| 2^{sj} \| \Delta_j (u^{k+1}) \|_{L^1_x L^2_{y,T}}\right\|_{\ell^2(\N)} \lesssim \| (1_{j \ge 0} 2^{-sj} ) *_j \| \Delta_j u\|_{\mathcal{X}^s_T(d)}\|_{\ell^2(\N)} \|u\|_{\mathcal{X}^s_T(d)}^k.
\]
By the discrete version of Young's convolution inequality, the latter term is controlled by $\|u\|_{\mathcal{X}^s_T(d)}.$ Therefore,
\[
\| \Gamma_{u_0} (u)\|_{\mathcal{X}^s_T(d)} \le C_s \|u_0\|_{H^s} + C\cdot T^{\delta} \|u\|_{\mathcal{X}^s_T(d)}^{k+1},
\]
and thus, for $a = 2C_s \|u_0\|_{H^s}$ and $T \sim_s (1+\|u_0\|_{H^s})^{-\beta_s},$ for some $\beta_s > 0,$ it holds that $\Gamma_{u_0}$ maps $\mathcal{E}_{a;d}(T)$ to itself. 
Moreover, redoing all the computations above with $\Gamma_{u_0} u - \Gamma_{u_0} v$ yields that it is, in fact, a \emph{contraction} on such space for such $T.$ Therefore, it has a unique fixed
point, which is our desired solution. 

By the standard-by-now methods, we conclude that this solution is unique and, by the fact that $\Gamma_{u_0}$ was Lipschitz, we conclude that the data-to-solution map is, in fact, locally Lipschitz 
on $C([0,T] \colon H^s) \cap \mathcal{X}^s_T(d),$ as desired. 

\subsection{Pointwise convergence of the flow} In this subsection, we discuss the proof of Theorem \ref{thm pointwise}. Indeed, we begin with an approximation lemma, which in turn is based off \cite[Proposition~3.3]{CLS}. In what follows, 
we denote by $u_N$ the (unique) solution to 
\begin{equation}\label{eq gZK smooth}
\begin{cases} 
\partial_t u_N + \partial_x \Delta u_N + \partial_x (P_N(u_N^{k+1})) = 0 & \text{ on } \R^d \times \R; \\
u_N(x,0) = P_N u_0(x), & \text{ on } \R^d. \\
\end{cases}
\end{equation}
By the energy method, for instance, we can see that solutions to \eqref{eq gZK smooth} are \emph{smooth}, as the initial data $P_N u_0$ is smooth for any $u_0 \in H^s.$ This fact will be crucial in the proofs below.

We start by proving that the $L^4-$maximal-in-time estimate for the group $U(t)$ fulfills our purposes whenever $d=2$ and $k \ge 2$, $d = 3, k\ge 3$ or $d \ge 3, k \ge 4.$ 

\begin{lemma}\label{lemma maximal} Suppose that, for $u_0 \in H^s(\R^d),$ we have that 
\[
\|u_N - u\|_{L^4_{x,y} L^{\infty}_T} \to 0 \text{ as } N \to \infty. 
\]
Then it holds that $u(z,t) \to u_0(z)$ as $t \to 0$ for almost every $z \in \R^d.$ 
\end{lemma}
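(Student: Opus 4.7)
The plan is to follow the standard Chebyshev/approximation scheme from \cite{CLS}, adapted to our setting. Define the \emph{oscillation function}
\[
\Phi(z) = \limsup_{t \to 0^+} |u(z,t) - u_0(z)|,
\]
and reduce matters to showing that $|\{z \in \R^d \colon \Phi(z) > \lambda\}| = 0$ for every $\lambda > 0$. Since $P_N u_0 \in H^{\infty}$, the energy method ensures $u_N$ is smooth enough on $\R^d \times [0,T]$ that $u_N(\cdot,t) \to P_N u_0(\cdot)$ pointwise (indeed uniformly on compacts) as $t \to 0$, which is the crucial input that makes the approximant $u_N$ usable in place of $u$.

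From here I would write, for each fixed $N$ and each $z$,
\[
|u(z,t) - u_0(z)| \le |u(z,t) - u_N(z,t)| + |u_N(z,t) - P_N u_0(z)| + |P_N u_0(z) - u_0(z)|,
\]
and pass to $\limsup_{t \to 0}$. The middle term vanishes pointwise by the continuity of $u_N$, so
\[
\Phi(z) \le \sup_{t \in [0,T]} |u(z,t) - u_N(z,t)| + |P_N u_0(z) - u_0(z)| =: M_N(z) + R_N(z).
\]
Then a union bound gives $|\{\Phi > \lambda\}| \le |\{M_N > \lambda/2\}| + |\{R_N > \lambda/2\}|$.

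For the first piece, Chebyshev's inequality in the $L^4_{x,y}$ sense controls
\[
|\{M_N > \lambda/2\}| \le \left(\tfrac{2}{\lambda}\right)^4 \| u - u_N\|_{L^4_{x,y} L^{\infty}_T}^4,
\]
which tends to $0$ as $N \to \infty$ by hypothesis. For the second piece, since $u_0 \in H^s(\R^d) \subset L^2(\R^d)$ we have $P_N u_0 \to u_0$ in $L^2$, so by Chebyshev again
\[
|\{R_N > \lambda/2\}| \le \left(\tfrac{2}{\lambda}\right)^2 \|P_N u_0 - u_0\|_{L^2}^2 \to 0.
\]
Letting $N \to \infty$ forces $|\{\Phi > \lambda\}| = 0$ for every $\lambda>0$, hence $\Phi = 0$ almost everywhere, which is exactly the desired pointwise convergence.

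The argument has no serious obstacle; the only point to be careful about is the qualitative claim that smooth solutions $u_N$ to \eqref{eq gZK smooth} are continuous up to $t=0$ (needed so that the middle term above truly vanishes pointwise in $z$), which follows from classical energy estimates applied to the frequency-truncated equation with smooth data $P_N u_0$. All the substantive analytic content was already packaged into the hypothesis $\|u_N - u\|_{L^4_{x,y}L^{\infty}_T} \to 0$, whose verification (via the maximal estimate of Proposition \ref{prop maximal estimate} applied to $u - u_N$ along with the nonlinear estimates developed for Theorem \ref{thm gZK dk}) is the genuine work, to be carried out separately.
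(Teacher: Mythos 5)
Your argument is correct and follows essentially the same route as the paper's proof: split via the approximants $u_N$, use their smoothness so that $u_N(z,t)\to P_N u_0(z)$ pointwise, and then apply Chebyshev's inequality with the $L^4_{x,y}L^{\infty}_T$ norm on $u-u_N$ and the $L^2$ norm on $(I-P_N)u_0$, letting $N\to\infty$. The only difference is presentational (you make the triangle-inequality decomposition and the union bound slightly more explicit), so there is nothing to add.
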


\begin{proof} The proof is another instance of the relationship between maximal functions and pointwise convergence. Indeed, by smoothness of $u_N,$ it holds that 
\[
u_N(z,t) \to P_N u_0(z) \text{ for all } z \in \R^d.
\]
Therefore, 
\[
\limsup_{t \to 0} |u(z,t) - u_0(z)| \le \limsup_{t \to 0} |u(z,t) - u_N(z,t)| + |(I-P_N)u_0(z)|.
\]
Thus, by Chebyshev's inequality, 
\[
m(\{z \in \R^d \colon \limsup_{t \to 0} |u(z,t) - u_0(z)| > \eps\}) \le \frac{1}{\eps^4} \|u-u_N\|_{L^4_{x,y}L^{\infty}_T}^4 + \frac{1}{\eps^2}\|(I-P_N)u_0\|_{L^2}^2.
\]
Notice that the two terms in the right-hand side above can be made arbitrarily small by letting $N \to \infty.$ Thus, we conclude that 
$m(\{z \in \R^d \colon \limsup_{t \to 0} |u(z,t) - u_0(z)| > \eps\}) = 0, \, \forall \, \eps > 0.$ This concludes the proof. 
\end{proof}

\begin{proof}[Proof of Theorem \ref{thm pointwise}, $d \ge 2, k \ge d$ case] We start by writing the Duhamel formulation of both \eqref{eq gZK} and \eqref{eq gZK smooth}. It gives us that 
\begin{equation}\label{eq pointwise final}
\|u_N - u\|_{L^4_{x,y}L^{\infty}_T}\! \lesssim\! \|u_0 - P_N u_0\|_{H^{\tilde{s}_d^+} }\! + \!\Big\|\int_0^t U(t-t') \partial_x(P_N (u_N^{k+1}) - u^{k+1})(t') \, \mmd t' \Big\|_{L^4_{x,y}L^{\infty}_T},
\end{equation}
where we used Proposition \eqref{prop maximal estimate} in the first term. In order to control the integral term, we observe that in the same way we proved Proposition \ref{prop ret max}, it holds that, 
\[
\left\| \int_0^t U(t-t') \partial_x \Delta_j f(t') \, \mmd t' \right\|_{L^4_{x,y}L^{\infty}_T} \lesssim 2^{\tilde{s}_d^+ j} \| \Delta_j f\|_{L^1_x L^2_{y,T}}.
\]
Thus, for all $s > \tilde{s}_d,$ the integral term $\Big\| \displaystyle\int_0^t U(t-t') \partial_x(P_N(u_N^{k+1}) - u^{k+1}) (t') \, \mmd t' \Big\|_{L^4_{x,y} L^{\infty}_T}$ is controlled by 

\begin{equation}\label{eq triangle}
\begin{split}
 C_s \left\| 2^{sj} \| \Delta_j (P_N(u_N^{k+1}) - u^{k+1})\|_{L^1_x L^2_{y,T}} \right\|_{\ell^2_j} \lesssim_s & \left\| 2^{sj} \| \Delta_j (P_N(u_N^{k+1} - u^{k+1}))\|_{L^1_x L^2_{y,T}} \right\|_{\ell^2_j} \\
													      & + \left\| 2^{sj} \| \Delta_j((I-P_N)u^{k+1}\|_{L^1_x L^2_{y,T}} \right\|_{\ell^2_j}. 
\end{split}													      
\end{equation}
Now the first term on the right-hand side of \eqref{eq triangle} is controlled, by Young's inequality, by $\|u-u_N\|_{\mathcal{X}^s_T(d)},$ with the definition of the 
$\mathcal{X}^s_T(d)$ spaces we have adopted throughout the text, as $s > \max(\frac{d}{2} - \frac{2}{k}, \tilde{s}_d).$ On the other hand, it is easy to see from the monotone convergence theorem, together 
with the proofs of Theorem \ref{thm gZK dk}, Theorem \ref{thm reproof} and Theorem 1.1 in \cite{RV1}, that the second term on the right-hand side of \eqref{eq triangle} goes to $0$ as $N \to \infty.$ Inserting back into 
\eqref{eq pointwise final}, we obtain, for $N \gg 1,$ 
\[
\| u_N - u\|_{L^4_{x,y} L^{\infty}_T} \lesssim_s \delta + \|u_N - u\|_{\mathcal{X}^s_T(d)}. 
\]
On the other hand, by Theorem \ref{thm gZK dk}, for $N$ sufficiently large and $T = T(\|u_0\|_s),$ it holds that 
\[
\|u_N - u\|_{\mathcal{X}^s_T (d)} \lesssim_s \|(I-P_N)u_0\|_{H^s} \lesssim_s \delta.
\]
This promptly implies that $\|u_N - u\|_{L^4_{x,y} L^{\infty}_T}$ can be made arbitrarily small as $N \to \infty,$ given $s > \max\left(\frac{d}{2} - \frac{2}{k}, \tilde{s}_d\right).$ 
The conditions of Lemma \ref{lemma maximal} are then met, and we have concluded the proof.
\end{proof}

In order to handle the $d=3,\, k=2$ case, we remark that another maximal estimate with bounds \emph{independent} of the dimension holds in the case of the group $U(t).$ In fact, a result 
by Cowling \cite{Cowling} has as by-product that, whenever $\Omega: \R^d \to \R$ is smooth and homogeneous of degree $m,$ then 
\[
\left\| \sup_{t \in [0,1]} |e^{it\Omega(D)} f| \right\|_{L^2(\R^d)} \lesssim \|f\|_{H^s},
\]
whenever $s > m/2.$ In our case, we obtain that the global bound 
\[
\| U(t) f\|_{L^2(\R^d) L^{\infty}_{[0,1]}} \lesssim \|f\|_{H^s}
\]
holds whenever $s> \frac{3}{2}.$ By Roger's local-to-global transference principle (see Theorem \ref{theoo b}), we have that the \emph{local} maximal bound
\begin{equation}\label{eq maximal local}
\| U(t) f\|_{L^2(B^d(0,1)) L^{\infty}_{[0,1]}} \lesssim \|f\|_{H^s}
\end{equation}
holds for all $s > \frac{1}{2}$ and all dimensions $d \ge 2.$ We will use this local maximal bound in the three-dimensional case of the modified Zakharov--Kuznetsov equation. 

\begin{lemma}\label{lemma maximal 3} Let $u, u_N$ denote solutions to \eqref{eq gZK} and \eqref{eq gZK smooth}, respectively, in the case $k=2, d=3.$ Suppose that, for $u_0 \in H^s(\R^3),$ we have the existence
of $T = T(\|u_0\|_s)$ so that 
\[
\|u_N - u\|_{L^2(B^3(0,1)) L^{\infty}_T} \to 0 \text{ as } N \to \infty. 
\]
Then it holds that $u(z,t) \to u_0(z)$ as $t \to 0$ for almost every $z \in \R^3.$ 
\end{lemma}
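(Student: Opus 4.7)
The plan is to mirror the proof of Lemma \ref{lemma maximal} with the local $L^2$-on-a-ball norm replacing the global $L^4_{x,y}$ norm. First, since $P_N u_0 \in H^{\infty}(\R^3)$ for every $N$, a standard energy argument applied to \eqref{eq gZK smooth} produces a smooth solution $u_N \in C([0,T]; H^{\infty}(\R^3))$, and in particular $u_N(z,t) \to P_N u_0(z)$ for every $z \in \R^3$ as $t \to 0$. Hence by the triangle inequality,
\[
\limsup_{t \to 0}|u(z,t) - u_0(z)| \le \sup_{t \in [0,T]}|u(z,t) - u_N(z,t)| + |(I - P_N)u_0(z)|.
\]

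Next, Chebyshev's inequality applied on the unit ball yields, for every $\eps > 0$,
\[
m\bigl(\{z \in B^3(0,1) \colon \limsup_{t\to 0}|u(z,t) - u_0(z)| > \eps\}\bigr) \lesssim \frac{1}{\eps^2}\bigl(\|u - u_N\|_{L^2(B^3(0,1)) L^{\infty}_T}^2 + \|(I-P_N)u_0\|_{L^2}^2\bigr).
\]
Letting $N \to \infty$, the first term on the right-hand side vanishes by the hypothesis of the lemma, and the second by $L^2$-convergence of the Littlewood--Paley projections. This produces a.e. pointwise convergence on $B^3(0,1)$.

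To upgrade from $B^3(0,1)$ to all of $\R^3$, I would invoke the translation invariance of \eqref{eq gZK} and of the $H^s$-norm. For any $z_0 \in \R^3$, the initial datum $v_0 := u_0(\cdot - z_0)$ satisfies $\|v_0\|_s = \|u_0\|_s$ and generates the solution $v(x,t) = u(x - z_0, t)$, with the analogous relation for the regularized problem. Since the lemma's hypothesis will be verified in the application from the translation-invariant \emph{local} maximal estimate \eqref{eq maximal local}, it transfers without loss to every translate $v_0$ with the \emph{same} time $T = T(\|u_0\|_s)$. Running the Chebyshev argument above for $v_0$ then yields a.e. convergence of $u$ to $u_0$ on the translated ball $B^3(z_0, 1)$, and a countable covering of $\R^3$ by unit balls delivers the desired global a.e. convergence.

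The principal conceptual point is precisely this globalization step: the $L^2(B^3(0,1)) L^{\infty}_T$-convergence hypothesis is intrinsically local and cannot by itself be upgraded to a statement about $\R^3$, so the argument depends crucially on reading the hypothesis as transferable to arbitrary translates. This is automatic in the present setting since \eqref{eq maximal local} is translation-invariant, but it is the only nontrivial ingredient beyond the routine Chebyshev-plus-approximation scheme of Lemma \ref{lemma maximal}.
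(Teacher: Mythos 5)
Your proof is correct and follows essentially the same route as the paper's: the paper likewise runs the Chebyshev-plus-approximation argument of Lemma \ref{lemma maximal} on a fixed unit ball and then uses translation invariance of the equations together with a countable covering of $\R^3$ by unit balls. Your explicit remark that the local hypothesis must be read as transferable to translates (as it is in the application, since \eqref{eq maximal local} and the spaces used to verify it are translation invariant) is exactly the point the paper leaves implicit when it ``omits the details.''
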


\begin{proof} The proof of this lemma is almost identical to that of Lemma \ref{lemma maximal}, only that this time we employ the Chebyshev inequality argument 
on a fixed unit ball of $\R^3,$ use the translation invariance of the equations involved and cover the euclidean space $\R^3$ by countably many such balls. We omit the details.
\end{proof}

\begin{proof}[Proof of Theorem \ref{thm pointwise}, $d=3, k=2$ case] By \cite[Lemma~2.9]{Tao} (see also \cite[Lemma~2.1]{CLS} and the comments thereafter), the maximal
estimate \eqref{eq maximal local} implies the continuous embedding $X^{s,b}_{\delta} \hookrightarrow L^2(B^3(0,1)) L^{\infty}_{[0,\delta]}$ for the Bourgain space $X^{s,b}_{\delta}$ when $s,b > \frac{1}{2},$
where we define the norm 
\[
\|F\|_{X^{s,b}} = \| \langle (\xi,\eta) \rangle^s \langle \tau - \xi(\xi^2 + |\eta|^2)\rangle^b\, \widehat{u}(\xi,\eta,\tau) \|_{L^2_{\xi,\eta,\tau}},
\]
where we have used the space-time Fourier transform above. Thus, it holds that 
\[
\left\| \sup_{t \in [0,\delta]} |F(x,t)| \right\|_{L^2(B^3(0,1))} \lesssim \|F\|_{X^{s,b}_{\delta}}, \, \forall \, F  \in X^{s,b}_{\delta}.
\]
This readily implies that 
\[
\|u-u_N\|_{L^2_{B^3} L^{\infty}_T} \lesssim \|u-u_N\|_{X^{s,b}_T}.
\]
By the Duhamel principle applied to $u,u_N$ and the properties of the Bourgain spaces $X^{s,b}_T,$ we see that 
\begin{equation}\label{eq pointwise bourgain}
\begin{split}
\|&u-u_N\|_{X^{s,b}_T} \\
&\lesssim \|(I-P_N)u_0\|_{H^s(\R^3)} + \|P_N \partial_x(u^3 - (u_N)^3))\|_{X^{s,b'}_T} + \|(I-P_N) \partial_x (u^3)\|_{X^{s,b'}_T},
\end{split}
\end{equation}
for some $b' > -\frac{1}{2}.$ Gr\"unrock's trilinear estimate \cite[Proposition~1]{Gru1} for the modified Zakharov--Kuznetsov equation in dimension three implies then that, for each $s>\frac{1}{2},$ 
there must be $b' > -\frac{1}{2}$ so that for all $b > \frac{1}{2},$
\begin{equation}\label{eq trilinear three}
\| \partial_x(u^3 - v^3)\|_{X^{s,b'}_T} \lesssim T^{\delta} \left(\|u\|_{X^{s,b}_T}^2 + \|v\|_{X^{s,b}_T}^2\right) \|u-v\|_{X^{s,b}_T}.
\end{equation}
The first consequence of \eqref{eq trilinear three} is by setting $v = u_N,$ which shows that 
\[
\|P_N \partial_x(u^3 - (u_N)^3))\|_{X^{s,b'}_T} \lesssim T^{\delta} C(\|u_0\|_s) \|u-u_N\|_{X^{s,b}_T}.
\]
By taking $T$ sufficiently small, it holds that the right-hand side of the expression above can be absorbed into the left-hand side 
of \eqref{eq pointwise bourgain}. As a second consequence, 
setting $v \equiv 0$ on \eqref{eq trilinear three} and using the definition of $X^{s,b}_T,$ we have that 
\[
\| (I-P_N) \partial_x (u^3)\|_{X^{s,b}_T} \to 0 \;\text{ as }\; N \to \infty. 
\]
Therefore, as $u_0 \in H^s, \, s> \frac{1}{2},$ we see that the left-hand side of \eqref{eq pointwise bourgain} converges to 0 as $N \to \infty.$ This finishes this case by Lemma \ref{lemma maximal 3}, and thus also the proof of Theorem 
\ref{thm pointwise}. \end{proof}

\section{Comments and Remarks} 

\subsection{Sharp maximal estimates in two dimensions} As discussed in \S \ref{sec sharp}, the space-time maximal estimates \eqref{eq maximal space time} are, in fact, \emph{sharp} for the three-dimensional case, 
due to our counterexamples and the previous works \cite{RV2, Gru1}. 

On the other hand, although we have made progress in the question of sharpness of space-time maximal estimates in dimension 2 through simple counterexamples as well as through 
the indirect method given in \eqref{sec 2d counter}, we still have a gap where next to nothing is known about sharpness. Indeed, for $p \in (2,4),$ we only know that the estimate
\begin{equation}\label{eq sharp 2d}
\| U(t)u_0\|_{L^p_x L^{\infty}_{y,T}} \lesssim \|u_0\|_s 
\end{equation}
holds, by interpolation, in the $s > \frac{3}{4}$ range. It is likely, however, that this is \emph{not} the sharp range for those indices; in fact, we conjecture the following: 
\begin{conjecture}\label{conj sharp} If $d=2$, $p=3,$ then \eqref{eq sharp 2d} holds for all $s > \frac{2}{3}.$ 
\end{conjecture}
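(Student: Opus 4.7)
The plan is to follow the $TT^*$ plus Littlewood--Paley framework used throughout Section 2. Setting $T_k u_0 := U(t) P_k u_0$, the target bound $\|T_k u_0\|_{L^3_x L^\infty_{y,T}} \lesssim 2^{ks} \|u_0\|_{L^2}$ at a fixed dyadic frequency scale $k$ follows by $TT^*$ and Young's inequality in $x$ from a kernel bound of the form
\[
\|I_k\|_{L^{3/2}_x L^\infty_{y,t}} \lesssim 2^{2ks},
\]
where $I_k(x,y,t)=\int \phi(2^{-k}(\xi,\eta))\, e^{i(x\xi+y\eta+t\xi(\xi^2+\eta^2))}\, d\xi\,d\eta$ is the usual Littlewood--Paley localized kernel. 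Reaching $s>2/3$ thus reduces to establishing $\|I_k\|_{L^{3/2}_x L^\infty_{y,t}} \lesssim 2^{(4/3+)k}$, and the parabolic rescaling $\xi=2^k\xi'$, $x=2^{-k}x'$, $y=2^{-k}y'$, $t=2^{-3k}t'$ reduces this at once to the unit-scale assertion that $\sup_{y,t}|\tilde I(\cdot,y,t)| \in L^{3/2}(\R)$, where $\tilde I$ is the frequency-unit-scale analogue of $I_k$.

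The first observation is that simple interpolation between the endpoint kernel bounds $\|I_k\|_{L^1_x L^\infty_{y,t}}$ and $\|I_k\|_{L^2_x L^\infty_{y,t}}$, both of the order $2^{(3/2+)k}$ and underlying respectively the Faminskii ($p=2$) and Linares--Pastor / Gr\"unrock ($p=4$) estimates, only produces $2^{(3/2+)k}$ for the $L^{3/2}_x$ norm and hence only $s>3/4$. A genuinely new input is therefore needed at the intermediate exponent, and I would pursue this via a bilinear approach: decompose the annulus $\{|(\xi,\eta)|\sim 1\}$ into angular sectors of aperture $2^{-j}$ and prove a bilinear $L^2$-type maximal estimate for transversal sector pairs, in the spirit of Tao--Vargas--Vega, bounding products $(U(t)P_{k,j_1}u_0)(U(t)P_{k,j_2}v_0)$ in $L^{3/4}_x L^\infty_{y,t}$ with a gain over what the trivial CS-then-linear bound would give. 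A Whitney-type summation across scales $j$ and angular pairs, followed by the standard passage from bilinear to linear via Tao's bootstrap, should then yield the desired linear maximal estimate for $s>2/3$.

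The principal obstacle is the lack of ellipticity of the dispersion surface $\{\tau=\xi(\xi^2+|\eta|^2)\}$. The Hessian of the phase with respect to $(\xi,\eta)$ has determinant $12\xi^2 - 4\eta^2$, which vanishes along the two rays $\eta = \pm \sqrt{3}\,\xi$; consequently the surface has saddle-type geometry, and the bilinear restriction theorems developed for uniformly elliptic hypersurfaces do not apply in neighbourhoods of these rays. Handling the sectors near these degenerate directions --- precisely where the saturating examples from our proof of Proposition \ref{thm necessary} concentrate, and which therefore dictate the sharp exponent --- will require either a refined angular decomposition that exploits the specific cubic structure of the phase in one of the two variables, or a bilinear $L^2$ bound tailored to cone-like geometry in the spirit of Wolff. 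This non-ellipticity is what separates the ZK maximal problem at $p=3$ from its Schr\"odinger counterpart, and I expect it to be the decisive technical hurdle in settling the conjecture.
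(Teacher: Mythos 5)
This statement is Conjecture \ref{conj sharp}: the paper does not prove it, and offers only a heuristic consistency check (reproducing the scheme of \S\ref{sec 2d counter} with $L^3_xL^\infty_{y,T}$-based norms would recover Gr\"unrock's sharp range $s>\frac13$ for the quartic 2D equation), so there is no proof of the paper to compare yours against; the question is whether your argument closes the conjecture on its own, and it does not. Your reduction is sound as far as it goes: Littlewood--Paley plus $TT^*$ plus Young in $x$ correctly reduces $s>\frac23$ at $p=3$ to the kernel bound $\|I_k\|_{L^{3/2}_xL^\infty_{y,t}}\lesssim 2^{(4/3)^+k}$, the parabolic rescaling to the unit-scale statement $\sup_{y,t}|\tilde I(\cdot,y,t)|\in L^{3/2}_x(\R)$ is computed correctly, and you rightly observe that interpolating the known $L^1_x$ and $L^2_x$ kernel bounds only reproduces $s>\frac34$. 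But everything after that is a research program, not a proof.

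Concretely, two essential steps are asserted rather than established. First, the bilinear maximal estimate for transversal angular sectors is not something you can quote: Tao--Vargas--Vega-type bilinear theory (and Tao's bilinear-to-linear bootstrap, and the Whitney summation) is formulated for extension estimates in symmetric space-time $L^p$ norms, whereas here the norm is $L^3_xL^\infty_{y,t}$ with a supremum in $(y,t)$; transversality would have to be quantified relative to the $x$-direction alone, the induction-on-scales structure changes, and no version of these tools for mixed-norm maximal estimates is proved or cited. Second, and by your own admission, the sectors around the degenerate rays $\eta=\pm\sqrt{3}\,\xi$, where the Hessian determinant $4t^2(3\xi^2-\eta^2)$ vanishes, are left untreated --- yet these are exactly the directions where the saturating examples of Proposition \ref{thm necessary} concentrate, so any argument that excludes them cannot give the conjectured exponent. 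Thus the proposal identifies the correct reduction and the correct enemy but proves neither of the two estimates on which it relies. Note also that your own reduction suggests a more elementary target than bilinear machinery: a uniform stationary-phase/van der Corput analysis giving $\sup_{y,t}|\tilde I(x,y,t)|\lesssim \langle x\rangle^{-5/6+\epsilon}$ (square-root decay in the nondegenerate direction, cube-root decay across the degenerate rays, after noting the phase is nonstationary in $\xi$ unless $|t|\sim|x|$) would already place the maximal kernel in $L^{3/2}_x$; carrying out that uniform two-variable analysis near the degenerate rays is precisely the missing content, and until it (or the bilinear substitute) is supplied the conjecture remains open.
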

A hand-waving justification for such a conjecture is the following: by the results in \cite{Gru2}, local well-posedness for the quartic Zakharov--Kuznetsov equation in two dimensions holds 
in $H^s,$ whenever $s > \frac{1}{3}.$ On the other hand, considering the modified norms 
\[
\|u\|_{\tilde{Y}^s_T} = \|u\|_{L^{\infty}_T L^2_{x,y}} + \|\langle \nabla \rangle^{s - \frac{2}{3}^+} u\|_{L^3_x L^{\infty}_{y,T}} + \|\langle \nabla \rangle^{s+1} u\|_{L^{\infty}_x L^2_{y,T}},
\]
with $\|u\|_{\tilde{X}^s_T} = \left\| 2^{sj} \|\Delta_j u\|_{\tilde{Y}^s_T}\right\|_{\ell^2(\N)},$ and reproducing the argument in \S \ref{sec 2d counter}, we see that if \eqref{eq sharp 2d} holds for 
all $s> \frac{2}{3},$ we recover the full range $s > \frac{1}{3}$ of local well-posedness. 

For other values of $p \in (2,4)$, it is not crystal clear what should happen. In fact, when $p \in (3,4),$ Conjecture \ref{conj sharp} plus Proposition \ref{thm necessary} would imply
that those estimates are, in fact, \emph{sharp} for the interpolation between the $L^4_xL^{\infty}_{y,T}$ and the $L^3_xL^{\infty}_{y,T}.$ On the range $p \in (2,3),$ on the other hand, 
we do not know what to expect: on the one hand, the range should \lq\lq blow up" to $s > \frac{3}{4}$ as $p \to 2;$ on the other hand, it is not, in principle, impossible for the range to be the 
necessary one given by Proposition \ref{thm necessary} up until the $p=2$ endpoint. 

\subsection{Maximal estimates and LWP for the $k=3$ case} As previously remarked, the Strichartz estimate
\[
\|U(t)u_0\|_{L^4_{x,y,t}} \lesssim \|u_0\|_{H^s}, \; s > \frac{d-3}{4},
\]
only allows us to prove local well-posedness for the IVP \ref{eq gZK} if $k \ge 4,$ and provides us with the full subcritical range of results in such cases by 
passing to suitable maximal functions. Together with the $k=1$ case in \cite{HK} and $k=2$ in \cite{Kinoshita2}, the only remaining case for proving local well-posedness for the 
generalized Zakharov--Kuznetsov equation in higher dimensions is $k=3.$ For $d=2,3,$ such a result was obtained by Gr\"unrock by using 
a suitable modification of the Ribaud--Vento techniques. 

One of the main features of Gr\"unrock's proof is the use of \emph{Strichartz estimates with derivative gain.} In fact, he is able to reach the full subcritical range by using the estimates 
\[
\| K(D_x,D_y)^{1/8} U(t) u_0 \|_{L^4_{x,y,t}(\R^3)} \lesssim \|u_0\|_{L^2(\R^2)},
\]
where we define $K(D_x,D_y)^{\sigma}g = \mathcal{F}^{-1}_{x,y} |3\xi^2 - \eta^2|^{\sigma} \mathcal{F}_{x,y}g,$ and 
\[
\| D_x^{1/10} U(t)u_0\|_{L^{\frac{15}{4}}(\R^4)} \lesssim \|u_0\|_{L^2(\R^3)}.
\]
Unfortunately, such estimates seem not to be available in higher dimensions, with an exception for the Kato Smoothing estimate in Proposition \ref{prop kato smoothing}.  
Instead of these, one may use the \emph{maximal estimates}, as illustrated by our method, in order to reach the sharp bounds. Indeed, as the space-time maximal estimate \eqref{eq maximal space time} 
is sharp in three dimensions, the argument in Section 3 above can be refined to prove local well-posedness for the whole subcritical range $s > \frac{3}{2} - \frac{2}{k}$ 
in three dimensions for \emph{all} $k \ge 2.$ 

As a matter of fact, we only need to redefine the norms defining the spaces $\mathcal{X}^s_T(3)$ as follows: let first 
\[
\|u\|_{\tilde{\mathcal{Y}}^s_T(3)} = \|u\|_{L^{\infty}_T L^2_{y,T}} + \| \langle \nabla_{x,y} \rangle^{s-1^+} u\|_{L^2_x L^{\infty}_{y,T}} + \| \langle \nabla_{x,y} \rangle^{s+1} u\|_{L^{\infty}_x L^2_{y,T}},
\]
and then $\|u\|_{\tilde{\mathcal{X}}^s_T(3)} = \left\| 2^{sj} \|\Delta_j u \|_{\tilde{\mathcal{Y}}^s_T(3)} \right\|_{\ell^2(\N)}.$ The computations 
performed in the proof of Theorem \ref{thm gZK dk} adapt easily due to the sharp bound 
\[
\|U(t)u_0\|_{L^2_x L^{\infty}_{y,T}} \lesssim \|u_0\|_s, \, s > 1,
\]
and we reobtain the following result:
\begin{theorem}[Main result in \cite{Gru1}; Three-dimensional result in \cite{Gru2}]\label{thm reproof} Let $d = 3$ and $k \in \{2,3\}.$ Then there are function spaces $\tilde{\mathcal{X}}^s_T$ so that for each $u_0 \in H^s(\R^3)$ with $s > \frac{3}{2} - \frac{2}{k},$ 
the IVP \eqref{eq gZK} has a unique solution 
\[
u \in C([0,T] \colon H^s) \cap \tilde{\mathcal{X}}^s_T,
\]
where $T = T(\|u_0\|_s) > 0.$ Moreover, the map $u_0 \mapsto u$ from $H^s(\R^3)$ to $\tilde{\mathcal{X}}^s_T \cap C([0,T] \colon H^s)$ is locally Lipschitz continuous. 
\end{theorem}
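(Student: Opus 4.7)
The plan is to mimic the contraction-mapping argument from the proof of Theorem \ref{thm gZK dk}, replacing the $L^4_x L^\infty_{y,T}$ maximal bound of Proposition \ref{prop maximal estimate} by the sharp three-dimensional $L^2_x L^\infty_{y,T}$ estimate of Faminskii, $\|U(t)u_0\|_{L^2_x L^\infty_{y,T}} \lesssim \|u_0\|_{H^s}$ for $s>1$; the space $\tilde{\mathcal{X}}^s_T(3)$ is already tailored so that this substitution fits seamlessly. First I would verify the linear bound $\|U(t)u_0\|_{\tilde{\mathcal{X}}^s_T(3)} \lesssim \|u_0\|_{H^s}$ by controlling each summand of $\tilde{\mathcal{Y}}^s_T(3)$ via unitarity of $U(t)$, Kato smoothing (Proposition \ref{prop kato smoothing}), and the Faminskii maximal bound. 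Their retarded versions, obtained from duality, the retarded smoothing bound of Proposition \ref{prop ret smooth}, and the anisotropic Christ--Kiselev lemma (as in Proposition \ref{prop ret max}) applied to the maximal estimate, then deliver
\[
\left\|\int_0^t U(t-t')\partial_x G(t')\,\mmd t'\right\|_{\tilde{\mathcal{X}}^s_T(3)} \lesssim \left\|2^{sj}\|\Delta_j G\|_{L^1_x L^2_{y,T}}\right\|_{\ell^2(\N)},
\]
in analogy with \eqref{eq bound nonlinear}.

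With $G = u^{k+1}$, I would interpolate the last two terms in $\tilde{\mathcal{Y}}^s_T(3)$ to obtain the one-parameter family of bounds
\[
2^{(s+1-2\theta)^- j}\,\|\Delta_j u\|_{L^{2/\theta}_x L^{2/(1-\theta)}_{y,T}} \lesssim \|\Delta_j u\|_{\tilde{\mathcal{Y}}^s_T(3)},
\]
valid along the line $\tfrac{1}{p}+\tfrac{1}{q}=\tfrac{1}{2}$. The choice $\theta=\tfrac12$ gives an $L^{4^+}_x L^{4^-}_{y,T}$ bound at weight $s$, which I would apply to the high-frequency factor in a paraproduct decomposition $\Delta_j(u^{k+1}) \sim \sum_{l\ge j-1}\Delta_l u\cdot (P_l u)^k$. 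The $k$ low-frequency factors require a different value of $\theta$, and Hölder's inequality forces exponents $(p_1,q_1)$ with $\tfrac{1}{p_1}+\tfrac{1}{q_1}=\tfrac{1}{k}$, so the pair falls slightly off the interpolation line and must be brought back by a small Sobolev embedding in $x$ (trivial when $k=2$, and costing only $\tfrac{k-2}{2k}$ derivatives when $k=3$). Combining with the trivial Sobolev bound $2^{-j^+}\|\Delta_j u\|_{L^N_{x,y,T}} \lesssim T^{\delta_N}\|u\|_{\tilde{\mathcal{X}}^s_T(3)}$ for large $N$ produces a small time factor $T^\delta$, and Young's discrete convolution inequality assembles the pieces into
\[
\left\|2^{sj}\|\Delta_j(u^{k+1})\|_{L^1_x L^2_{y,T}}\right\|_{\ell^2(\N)} \lesssim T^\delta \|u\|_{\tilde{\mathcal{X}}^s_T(3)}^{k+1}.
\]

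The main (and essentially only) obstacle is the book-keeping: one must check that the Sobolev embedding loss combined with the weight loss from interpolation sums to the precise threshold $s > \tfrac{3}{2} - \tfrac{2}{k}$, exactly in parallel with the two-dimensional computation of \S\ref{sec 2d counter} but with the three-dimensional exponents in Propositions \ref{prop kato smoothing} and the Faminskii bound. Once this arithmetic is in place, the standard Picard scheme produces a unique fixed point of $\Gamma_{u_0}$ in a ball of $\tilde{\mathcal{X}}^s_T(3)$ for $T \sim_s (1+\|u_0\|_{H^s})^{-\beta_s}$, and repeating the nonlinear estimate with $\Gamma_{u_0}u-\Gamma_{u_0}v$ in place of $\Gamma_{u_0}u$ yields the local Lipschitz continuity of the data-to-solution map, finishing the proof exactly as in Theorem \ref{thm gZK dk}.
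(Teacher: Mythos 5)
Your proposal follows essentially the same route as the paper: Theorem \ref{thm reproof} is obtained there precisely by swapping the $L^4_xL^\infty_{y,T}$ ingredient of Proposition \ref{prop maximal estimate} for the sharp three-dimensional $L^2_xL^\infty_{y,T}$ maximal bound at regularity $s>1$, encoding it in the modified norms $\tilde{\mathcal{Y}}^s_T(3)$, $\tilde{\mathcal{X}}^s_T(3)$, and rerunning the contraction scheme of Theorem \ref{thm gZK dk}. Your bookkeeping --- the interpolation family along $\frac1p+\frac1q=\frac12$ with weight $(s+1-2\theta)^-$, the choice $\theta=\frac12$ for the high-frequency factor, the Sobolev cost $\frac{k-2}{2k}$ in $x$ for the low-frequency factors, and the resulting threshold $s>\frac32-\frac2k$ --- is exactly the computation the paper leaves implicit, and it checks out.

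Two caveats, one of which is a genuine (though repairable) gap. First, an attribution slip: the three-dimensional estimate $\|U(t)u_0\|_{L^2_xL^\infty_{y,T}}\lesssim\|u_0\|_{H^s}$, $s>1$, is due to Ribaud--Vento \cite[Proposition~3.3]{RV2}; Faminskii's $s>3/4$ bound is the two-dimensional one. Second, and more substantively, you justify the retarded maximal estimate by the anisotropic Christ--Kiselev lemma ``as in Proposition \ref{prop ret max}'', but at these exponents the hypothesis used there is borderline and fails: with output $L^2_xL^\infty_{y,T}$ and input $L^1_xL^2_{y,T}$ the condition reads $\min(2,\infty)>\max\left(2,1,\frac{2\cdot 1}{2}\right)$, i.e.\ $2>2$. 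So the passage from the non-retarded composition (maximal estimate composed with dual Kato smoothing) to the truncated Duhamel integral needs a different argument at the $L^2_x$ level --- for instance the retarded estimates established directly in \cite{RV2}, which rely on their time-weighted maximal bound $\|t^{\alpha}U(t)u_0\|_{L^2_xL^\infty_{y,t}}\lesssim\|u_0\|_{H^s}$, $s>1$, rather than a Christ--Kiselev reduction. The paper's own one-paragraph sketch is silent on this point, so the estimate you need is true and available in the literature, but as written your invocation of Christ--Kiselev does not cover it.
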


As mentioned previously, we (still) do not know whether the conjectured sharp bound 
\begin{equation}\label{eq l2 alld}
\| U(t) u_0\|_{L^2_x L^{\infty}_{y,T}} \lesssim \|u_0\|_s, \; s > \frac{d-1}{2}
\end{equation}
holds in order to adapt the idea given above to the high-dimensional setting. Nevertheless, the Sobolev embedding theorem easily implies that 
\[
\| U(t)u_0\|_{L^2_x L^{\infty}_{y,T}} \lesssim \| \langle \nabla \rangle^{\frac{d-1}{2}^+} U(t) u_0 \|_{L^2_{x,y} L^{\infty}_T} \lesssim \| (\partial_x \Delta)^{\frac{1}{2}^+} \langle \nabla \rangle^{\frac{d-1}{2}^+} U(t) u_0 \|_{L^2_{x,y,T}},
\]
which, on the other hand, is bounded by $T^{1/2} \|u_0\|_{H^s},$ whenever $s > \frac{d+2}{2}.$ By defining the norm 
\[
\|u\|_{\tilde{\mathcal{Y}}^s_T(d)} = \|u\|_{L^{\infty}_T L^2_{y,T}} + \| \langle \nabla_{x,y} \rangle^{s-\left(\frac{d+2}{2}\right)^+} u\|_{L^2_x L^{\infty}_{y,T}} + \| \langle \nabla_{x,y} \rangle^{s+1} u\|_{L^{\infty}_x L^2_{y,T}}
\]
and consequently $\|u\|_{\tilde{\mathcal{X}}^s_T(d)} = \left\| 2^{sj} \|\Delta_j u\|_{\tilde{\mathcal{Y}^s_T(d)}} \right\|_{\ell^2(\N)},$ the aforementioned techniques to 
set the Picard iteration scheme in motion imply immediately the following result:

\begin{theorem} For $d \ge 4,\, k =3,$  we have that for each $u_0 \in H^s(\R^d)$ with $s > \frac{d}{2} + \frac{5}{6},$ 
the IVP \eqref{eq gZK} has a unique solution 
\[
u \in C([0,T] \colon H^s) \cap \tilde{\mathcal{X}}^s_T,
\]
where $T = T(\|u_0\|_s) > 0.$ Moreover, the map $u_0 \mapsto u$ from $H^s(\R^3)$ to $\tilde{\mathcal{X}}^s_T \cap C([0,T] \colon H^s)$ is locally Lipschitz continuous. 
\end{theorem}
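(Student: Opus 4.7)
\medskip

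\noindent\textbf{Proof proposal.} The plan is to run Picard iteration for the Duhamel map
\[
\Gamma_{u_0}(u)(t) = U(t)u_0 + \int_0^t U(t-t')\partial_x(u^4)(t')\,\mathrm{d}t'
\]
on a closed ball of the space $\tilde{\mathcal{X}}^s_T(d)$ defined just above the theorem, exactly as in the proof of Theorem \ref{thm gZK dk}. The building block is the degenerate maximal bound $\|U(t)u_0\|_{L^2_x L^\infty_{y,T}}\lesssim T^{1/2}\|u_0\|_{H^s}$ for $s>\tilde{s}_d:=\frac{d+2}{2}$, obtained in the text from Sobolev embedding in $(y,t)$ combined with Plancherel. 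Taking this, Proposition \ref{prop kato smoothing}, and their retarded versions (via dualization and the anisotropic Christ--Kiselev lemma, as in Propositions \ref{prop ret group}--\ref{prop ret max}) as inputs, the linear piece satisfies $\|U(t)u_0\|_{\tilde{\mathcal{X}}^s_T(d)} \lesssim \|u_0\|_{H^s}$ and the nonlinear piece satisfies
\[
\Big\|\int_0^t U(t-t')\partial_x(u^4)(t')\,\mathrm{d}t'\Big\|_{\tilde{\mathcal{X}}^s_T(d)} \lesssim \left\|2^{sj}\|\Delta_j(u^4)\|_{L^1_xL^2_{y,T}}\right\|_{\ell^2(\mathbb{N})},
\]
so the whole game reduces to bounding the right-hand side by $T^\delta \|u\|_{\tilde{\mathcal{X}}^s_T(d)}^4$.

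\medskip

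\noindent To estimate $\|\Delta_j(u^4)\|_{L^1_xL^2_{y,T}}$, I would use the paraproduct decomposition
\[
\Delta_j(u^4) = \Delta_j\!\Big[\sum_{l\ge j-1}(\Delta_l u)\,\big((P_l u)^3 + \text{lower}\big)\Big] + \Delta_j[(P_0 u)^4]
\]
followed by H\"older in the form $L^1_xL^2_{y,T}\hookrightarrow L^{p_1^+}_xL^{q_1^-}_{y,T}\cdot (L^{(3p_1')^-}_x L^{(3\tilde q_1)^+}_{y,T})^{\otimes 3}$. Mixed-norm interpolation between the two non-trivial pieces of $\|\cdot\|_{\tilde{\mathcal{Y}}^s_T(d)}$ gives, with parameter $\theta\in[0,1]$,
\[
2^{(s+1-\theta(1+\tilde{s}_d))j}\|\Delta_j u\|_{L^{2/\theta}_x L^{2/(1-\theta)}_{y,T}} \lesssim \|\Delta_j u\|_{\tilde{\mathcal{Y}}^s_T(d)}.
\]
Choosing $\theta=(1+\tilde{s}_d)^{-1}$ places us at $(p_1,q_1)=(2(1+\tilde{s}_d),2(1+\tilde{s}_d)/\tilde{s}_d)$ with full weight $2^{sj}$, which handles the first H\"older factor. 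For the second factor the dual exponent forces $3\tilde q_1=6(1+\tilde{s}_d)$, and the corresponding interpolation parameter $\theta=(3\tilde{s}_d+2)/(3(1+\tilde{s}_d))$ gives an $x$-exponent that is strictly smaller than the required $3p_1'=6(1+\tilde{s}_d)/(1+2\tilde{s}_d)$. I would close this gap by Sobolev embedding in the $x$-variable; a direct computation shows the cost is precisely $\frac{1}{6}$ derivatives, which leaves the bound
\[
2^{(s+\frac{1}{6}-\tilde{s}_d)^-\,j}\|\Delta_j u\|_{L^{(3p_1')^+}_x L^{(3\tilde q_1)^+}_{y,T}} \lesssim \|\Delta_j u\|_{\tilde{\mathcal{Y}}^s_T(d)}.
\]

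\medskip

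\noindent Summing the paraproduct via the discrete Young convolution inequality, together with a $T^\delta$ slack coming from interpolation with the trivial bound $2^{-j^+}\|\Delta_j u\|_{L^N_{x,y,T}}\lesssim T^{\delta_N}\|\Delta_j u\|_{L^\infty_T L^2_{x,y}}$ for $N\gg 1$, yields
\[
\left\|2^{sj}\|\Delta_j(u^4)\|_{L^1_xL^2_{y,T}}\right\|_{\ell^2(\mathbb{N})} \lesssim T^\delta\,\|u\|_{\tilde{\mathcal{X}}^s_T(d)}^4,
\]
provided the exponent $s+\frac{1}{6}-\tilde{s}_d$ in the geometric series over $l\ge j-1$ is strictly positive. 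This is precisely the condition $s>\tilde{s}_d-\frac{1}{6}=\frac{d+2}{2}-\frac{1}{6}=\frac{d}{2}+\frac{5}{6}$. Contraction of $\Gamma_{u_0}$ on the ball of radius $2C_s\|u_0\|_{H^s}$ and the Lipschitz property of the flow then follow from the same estimate applied to $\Gamma_{u_0}(u)-\Gamma_{u_0}(v)$, for $T\sim_s(1+\|u_0\|_{H^s})^{-\beta_s}$.

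\medskip

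\noindent\textbf{Main obstacle.} The only nontrivial point is the $\frac{1}{6}$ derivative loss forced by the Sobolev step in $x$; this is a direct consequence of the fact that the only $L^2_x L^\infty_{y,T}$ bound available in dimension $d\ge 4$ comes from the lossy Sobolev route (giving $\tilde{s}_d=\frac{d+2}{2}$ instead of the conjectured $\frac{d-1}{2}$ of \eqref{eq l2 alld}). Upgrading $\tilde{s}_d$ to $\frac{d-1}{2}^+$ would automatically improve the threshold to $s>\frac{d}{2}-\frac{2}{3}$, matching the three-dimensional $k=3$ result of Theorem \ref{thm reproof}; absent that, the bound $s>\frac{d}{2}+\frac{5}{6}$ is what the argument naturally delivers.
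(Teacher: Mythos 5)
Your proposal is correct and takes essentially the same route as the paper: the text proves this theorem precisely by feeding the lossy Sobolev-derived bound $\|U(t)u_0\|_{L^2_x L^\infty_{y,T}}\lesssim T^{1/2}\|u_0\|_{H^s}$, $s>\tfrac{d+2}{2}$, into the Ribaud--Vento iteration in $\tilde{\mathcal{X}}^s_T(d)$ exactly as in Theorems \ref{thm gZK dk} and \ref{thm reproof}, and your explicit bookkeeping (the $\tfrac16$ Sobolev loss in $x$, yielding $s>\tfrac{d+2}{2}+\tfrac12-\tfrac23=\tfrac d2+\tfrac56$) fills in what the paper leaves implicit. The only caveat, shared with the paper's own sketch, is that the retarded $L^2_x L^\infty_{y,T}$ estimate sits at the endpoint of the Christ--Kiselev condition ($\min=2$ is not strictly greater than $\max=2$), so it should be obtained as in Proposition \ref{prop ret smooth} and the Ribaud--Vento arguments rather than by citing the anisotropic lemma directly.
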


Currently, we believe that the estimate \eqref{eq l2 alld} holds for all $d \ge 3.$ Nonetheless, we also believe that the use of \eqref{eq l2 alld} is not strictly necessary 
in order to prove local well-posedness in the full subcritical range $s > \frac{d}{2} - \frac{2}{k}.$ Indeed, Kinoshita's method for proving local well-posedness (and, in fact, also 
global well-posedness on the critical Sobolev space for small data) only uses a bilinear estimate and Strichartz estimates for the group $U(t);$ therefore, it might be possible to 
reach the full range by employing Proposition \ref{thm strichartz} and other non-endpoint inequalities. 

\section*{Acknowledgements}
F. L. was partially supported by CNPq and FAPERJ, Brazil. J.P.G.R. acknowledges financial support from CNPq, Brazil.

\end{document}